\newtheorem{theorem}{Theorem}[section]
\newtheorem{lemma}[theorem]{Lemma}
\newtheorem{proposition}[theorem]{Proposition}
\newtheorem{corollary}[theorem]{Corollary}
\theoremstyle{definition}
\newtheorem{definition}[theorem]{Definition}
\theoremstyle{remark}
\newtheorem*{note*}{Note}
\numberwithin{equation}{section}
\newcommand{\rank}{\mathop{\operator@font rank}}
\newcommand{\conv}{\mathop{\operator@font conv}}
\newcommand{\vol}{\mathop{\operator@font vol}}
\newcommand{\onetagright}{\tagsleft@false}
\newcommand{\ls}{\leqslant}
\newcommand{\gr}{\geqslant}
\renewcommand{\epsilon}{\varepsilon}
\newcommand{\abs}[1]{\left\lvert#1\right\rvert}
\newcommand{\set}[1]{\left\lbrace#1\right\rbrace}
\begin{document}
\small

\title{\bf Threshold for the expected measure of the convex hull of random points with independent coordinates}

\medskip

\author{Minas Pafis}

\date{}

\maketitle

\begin{abstract}\footnotesize Let $\mu$ be an even Borel probability measure on ${\mathbb R}$. For every $N>n$ consider $N$
independent random vectors $\vec{X}_1,\ldots ,\vec{X}_N$ in ${\mathbb R}^n$, with independent coordinates having distribution
$\mu $. We establish a sharp threshold for the product measure $\mu_n$ of the random polytope $K_N:=\operatorname{conv}\bigl\{\vec{X}_1,\ldots
,\vec{X}_N\bigr\}$ in ${\mathbb R}^n$ under the assumption that the Legendre transform $\Lambda_{\mu}^{\ast}$ of the logarithmic moment generating function of $\mu$ satisfies the condition  $$\lim\limits_{x\uparrow x^{\ast}}\dfrac{-\ln \mu ([x,\infty
))}{\Lambda_{\mu}^{\ast}(x)}=1,$$ where $x^{\ast}=\sup\set{x\in\mathbb{R}\colon \mu([x,\infty))>0}$.
An application is a sharp threshold for the case of the product measure $\nu_p^n=\nu_p^{\otimes n}$, $p\gr 1$ with density
$(2\gamma_p)^{-n}\exp(-\|x\|_p^p)$, where $\|\cdot\|_p$ is the $\ell_p^n$-norm and $\gamma_p=\Gamma(1+1/p)$.
\end{abstract}

\section{Introduction}

Let $\mu $ be an even Borel probability measure on the real line and let $X_1,\ldots ,X_n$ be independent and identically distributed random variables, defined on some probability space $(\varOmega,\mathcal{F},P),$ each with distribution $\mu$, i.e., $\mu(B):=P(X_i\in B)$ for all $i\ls n$ and all $B$ in the Borel $\sigma$-algebra $\mathcal{B}(\mathbb{R})$ of ${\mathbb R}$.
Consider the random vector $\vec{X}=(X_1,\ldots ,X_n)$ and, 
for a fixed $N$ satisfying $N>n$, consider $N$ independent copies
$\vec{X}_1,\ldots ,\vec{X}_N$ of $\vec{X}$. The distribution of $\vec{X}$ is
$\mu_n:=\mu\otimes\cdots\otimes\mu $ ($n$ times) and the distribution of
$(\vec{X}_1,\ldots ,\vec{X}_N)$ is $\mu_n^N:=\mu_n\otimes\cdots\otimes\mu_n$ ($N$ times).
Our aim is to obtain a sharp threshold for the expected $\mu_n$-measure of the random polytope
$$K_N:=\operatorname{conv}\bigl\{\vec{X}_1,\ldots,\vec{X}_N\bigr\}.$$
In order to make the notion of a sharp threshold precise, for any $n\gr 1$ and $\delta\in \left(0,\tfrac{1}{2}\right)$ we define
the upper threshold
\begin{equation}\label{eq:kappa-1}\varrho_1(\mu_n,\delta)=\sup\{\varrho_1:\sup\{{\mathbb E}_{\mu_n^N}[\mu_n (K_N)]:N\ls \exp (\varrho_1n)\}\ls \delta \} \end{equation}
and the lower threshold
\begin{equation}\label{eq:kappa-2}\varrho_2(\mu_n,\delta )=\inf\{\varrho_2:\inf\{{\mathbb E}_{\mu_n^N}[\mu_n (K_N)]:N\gr \exp (\varrho_2n)\}\gr 1-\delta \}.\end{equation}
Then, we say that $\{\mu_n\}_{n=1}^{\infty }$ exhibits a sharp threshold if
$$\varrho (\mu_n,\delta ):=\varrho_2(\mu_n,\delta )-\varrho_1(\mu_n,\delta )\longrightarrow 0$$
as $n\to\infty $, for any fixed $\delta\in \left(0,\tfrac{1}{2}\right)$.

A threshold of this form was first established in the classical work of Dyer, F\"{u}redi and McDiarmid \cite{DFM}
for the case of the uniform measure $\mu$ on $[-1,1]$. We apply the general approach that was proposed in \cite{BGP-threshold}
and obtain an affirmative answer for a general even probability measure $\mu$ on ${\mathbb R}$ that satisfies some additional assumptions,
which we briefly explain (see Section~\ref{section:background} for more details).
We assume that $\mu$ is non-degenerate, i.e. ${\rm Var}(X)>0$. Let $$x^{\ast}  =x^{\ast}  (\mu ):=\sup\set{x\in\mathbb{R}\colon
\mu([x,\infty))>0}$$ be the right endpoint of the support of $\mu $ and set $I_{\mu}=(-x^{\ast},x^{\ast})$. Note that since $\mu$ is non-degenerate and even, we have that $x^{\ast}>0$. Let
$$g(t):= {\mathbb E}\bigl(e^{tX}\bigr):=\int_\mathbb{R} e^{tx}\, d\mu(x),\qquad t\in {\mathbb R}$$ denote
the moment generating function of $X,$ and let $\Lambda_{\mu}(t):=\ln g(t)$ be its logarithmic moment
generating function. By H\"{o}lder's inequality, $\Lambda_{\mu} $ is a convex function on ${\mathbb R}$. Consider the Legendre transform
$\Lambda_{\mu}^{\ast}:I_{\mu}\to {\mathbb R}$ of $\Lambda_{\mu}$; this is the function
$$\Lambda_{\mu}^{\ast}(x):=\sup\set{tx-\Lambda_{\mu}(t)\colon t\in\mathbb{R}}.$$
One can show (see Proposition~\ref{prop:expectation-1}) that $\Lambda_{\mu}^{\ast}$ has finite moments of all orders.

We say that $\mu$ is {\it admissible} if it is non-degenerate, i.e. ${\rm Var}_{\mu}(X)>0$, and satisfies the following conditions:
\begin{enumerate}
\item[(i)] There exists $r>0$ such that ${\mathbb E}\bigl(e^{tX}\bigr)<\infty$
for all $t\in (-r,r)$; in particular, $X$ has finite moments of all orders.
\item[(ii)] One of the following holds: (1) $x^{\ast }<+\infty $ and $P(X=x^{\ast})=0$, 
or (2) $x^{\ast }=+\infty $ and $\{\Lambda_{\mu}<\infty\}={\mathbb R}$,
or (3) $x^{\ast }=+\infty $, $\{\Lambda_{\mu}<\infty\}$ is bounded and $\mu$ is log-concave.
\end{enumerate}
Finally, we say that $\mu$ satisfies {\it the $\Lambda^{\ast }$-condition} if
$$\lim\limits_{x\uparrow x^{\ast}}\dfrac{-\ln \mu ([x,\infty ))}{\Lambda_{\mu}^{\ast }(x)}=1.$$
We often express this condition in the form $-\ln \mu ([x,\infty ))\sim\Lambda_{\mu}^{\ast }(x)$ 
as $x\uparrow x^{\ast}$, where ``$a(x)\sim b(x)$ as $x\to A$" stands for ``$\lim\limits_{x\to A}\frac{a(x)}{b(x)}=1$".
With these definitions, our main result is the following.

\begin{theorem}\label{th:final}Let $\mu $ be an admissible even probability measure on ${\mathbb R}$
that satisfies the $\Lambda^{\ast}$-condition. Then, for any $\delta\in \left(0,\tfrac{1}{2}\right)$
and any $\epsilon\in (0,1)$ there exists $n_0(\mu,\delta,\epsilon)$ such that
$$\varrho_1(\mu_n,\delta)\gr (1-\epsilon){\mathbb E}_{\mu}(\Lambda_{\mu}^{\ast})\quad\hbox{and}\quad 
\varrho_2(\mu_n,\delta)\ls (1+\epsilon){\mathbb E}_{\mu}(\Lambda_{\mu}^{\ast})$$
for every $n\gr n_0(\mu,\delta,\epsilon)$. In particular, $\{\mu_n\}_{n=1}^{\infty }$ exhibits a sharp threshold, i.e.
$\lim\limits_{n\to\infty}\varrho(\mu_n ,\delta )=0$, 
with ``threshold constant" ${\mathbb E}_{\mu}(\Lambda_{\mu}^{\ast})$.
\end{theorem}

In Section~\ref{section:p} we give an application of Theorem~\ref{th:final} to the case of the product $p$-measure
$\nu_p^n:=\nu_p^{\otimes n}$. For any $p\gr 1$ we denote by $\nu_p$ the probability
distribution on ${\mathbb R}$ with density $(2\gamma_p)^{-1}\exp(-|x|^p)$, where $\gamma_p=\Gamma(1+1/p)$.
We show that $\nu_p$ satisfies the $\Lambda^{\ast}$-condition.

\begin{theorem}\label{th:p-intro}For any $p\gr 1$ we have that
\begin{equation*}\lim_{x\to\infty}\frac{-\ln (\nu_p[x,\infty))}{\Lambda_{\nu_p}^{\ast}(x)}=1.\end{equation*}
\end{theorem}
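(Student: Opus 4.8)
\textbf{Proof proposal.} The plan is to show that, as $x\to\infty$, both the numerator and the denominator in Theorem~\ref{th:p-intro} equal $x^p+O(\ln x)$; the claimed limit is then immediate. Write $q=\tfrac{p}{p-1}$ for the conjugate exponent of $p$ when $p>1$, and abbreviate $\Lambda=\Lambda_{\nu_p}$, $\Lambda^{\ast}=\Lambda_{\nu_p}^{\ast}$. One inequality is free: since $\nu_p$ is even we have $\Lambda(0)=0$, $\Lambda'(0)=0$ and $\Lambda$ is strictly convex, so for each $x>0$ the supremum defining $\Lambda^{\ast}(x)$ is attained at some $t_x>0$, whence $\Lambda^{\ast}(x)>0$ and, by Markov's inequality, $\nu_p[x,\infty)\ls e^{-t_xx}\,{\mathbb E}(e^{t_xX})=e^{-(t_xx-\Lambda(t_x))}=e^{-\Lambda^{\ast}(x)}$, i.e. $-\ln\nu_p[x,\infty)\gr\Lambda^{\ast}(x)$. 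Thus it suffices to prove the two estimates
$$-\ln\nu_p[x,\infty)=x^p+O(\ln x)\qquad\text{and}\qquad \Lambda^{\ast}(x)\gr x^p-O(\ln x)\qquad(x\to\infty).$$

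The tail estimate is elementary: substituting $u=t^p$ gives $\nu_p[x,\infty)=(2\gamma_p)^{-1}\int_x^\infty e^{-t^p}\,dt=(2p\gamma_p)^{-1}\int_{x^p}^\infty e^{-u}u^{1/p-1}\,du$, and the standard incomplete-Gamma asymptotic $\int_y^\infty e^{-u}u^{a-1}\,du=e^{-y}y^{a-1}(1+O(1/y))$ yields $-\ln\nu_p[x,\infty)=x^p+(p-1)\ln x+O(1)$.

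For the Legendre transform (the heart of the matter when $p>1$) I will first establish the one-sided bound $\Lambda(t)\ls \tfrac1q p^{1-q}t^q+O(\ln t)$ as $t\to\infty$. Split $e^{\Lambda(t)}=(2\gamma_p)^{-1}\bigl(\int_{-\infty}^0+\int_0^\infty\bigr)e^{tx-|x|^p}\,dx$; the negative part is at most $\int_{-\infty}^0e^{tx}\,dx=1/t$. For the positive part let $\varphi(x)=x^p-tx$, with minimiser $x_0=(t/p)^{1/(p-1)}$ and $M(t):=-\varphi(x_0)=\tfrac1qp^{1-q}t^q$: on $[0,2x_0]$ bound the integrand by $e^{M(t)}$, contributing at most $2x_0e^{M(t)}$, and on $[2x_0,\infty)$ use convexity of $\varphi$ together with $\varphi'(2x_0)=(2^{p-1}-1)t\gr 0$ and $-\varphi(2x_0)=(2p-2^p)x_0^p\ls (p-1)x_0^p=M(t)$ (valid since $2^p\gr p+1$ for $p\gr 1$) to bound this piece by $\varphi'(2x_0)^{-1}e^{M(t)}$. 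Since $x_0$ grows polynomially in $t$, this gives $\int_0^\infty e^{tx-x^p}\,dx\ls P(t)e^{M(t)}$ for a polynomial factor $P$, and taking logarithms yields the displayed bound. Now for any $t>0$, $\Lambda^{\ast}(x)\gr tx-\Lambda(t)\gr tx-M(t)-O(\ln t)$; a direct computation shows $tx-M(t)$ is maximised at $t=px^{p-1}$ with maximal value exactly $x^p$, so choosing this $t$ gives $\Lambda^{\ast}(x)\gr x^p-O(\ln x)$. Combined with the free inequality, $\Lambda^{\ast}(x)=x^p+O(\ln x)$, and both quantities in the ratio are $x^p(1+o(1))$.

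The case $p=1$ is handled separately and immediately: then $\nu_1$ is the symmetric exponential law, so $-\ln\nu_1[x,\infty)=x+\ln 2$ and $\Lambda(t)=-\ln(1-t^2)$ on $(-1,1)$ (and $+\infty$ elsewhere); plugging $t=1-1/x$ into $tx-\Lambda(t)$ gives $\Lambda^{\ast}(x)\gr x-\ln x+O(1)$, while the Markov bound gives $\Lambda^{\ast}(x)\ls x+\ln 2$, so again the ratio tends to $1$. The main obstacle is the uniformity in the one-sided estimate for $\Lambda(t)$: for $1<p<2$ the function $\varphi$ is rather flat near its minimum, so the splitting radius (a suitable fixed multiple $cx_0$ of $x_0$ with $c^{p-1}>p$, for which $2x_0$ already works thanks to $2^p\gr p+1$) and the bookkeeping of the polynomial factor must be arranged so that the off-saddle contribution is genuinely of lower exponential order than $e^{M(t)}$. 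Finally, one checks routinely that $\nu_p$ is admissible in the sense of the paper, so that Theorem~\ref{th:final} indeed applies to the product measures $\nu_p^n$.
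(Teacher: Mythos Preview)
Your proof is correct and takes a genuinely different, more elementary route than the paper's.

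The paper proceeds by establishing full two-sided asymptotics for $\Lambda_{\nu_p}(t)$ via Watson's lemma (Laplace method), then invokes a differentiation lemma of de~Bruijn type to pass from $\Lambda_{\nu_p}(t)\sim\frac{p-1}{p^q}t^q$ to $\Lambda_{\nu_p}'(t)\sim p^{1-q}t^{q-1}$, inverts to get $h(x)\sim px^{p-1}$, and finally substitutes into $\Lambda^{\ast}(x)=xh(x)-\Lambda(h(x))$ to obtain $\Lambda^{\ast}(x)\sim x^p$; for the tail $-\ln\nu_p[x,\infty)\sim x^p$ it appeals to a general regular-variation result from Bingham--Goldie--Teugels. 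Your argument sidesteps all of this machinery by exploiting the Chernoff inequality $\Lambda^{\ast}(x)\ls -\ln\nu_p[x,\infty)$, which gives one direction for free. You then only need a one-sided upper bound $\Lambda(t)\ls M(t)+O(\ln t)$ with $M(t)=\frac{p-1}{p^q}t^q$, which you obtain by a direct and entirely elementary splitting of the integral at $2x_0$; plugging in the exact maximiser $t=px^{p-1}$ of $tx-M(t)$ (which equals $x^p$ there) yields $\Lambda^{\ast}(x)\gr x^p-O(\ln x)$. The tail estimate you handle with the standard incomplete-Gamma asymptotic rather than regular variation. The upshot is that you avoid Watson's lemma, the de~Bruijn differentiation step, and the inversion of $\Lambda'$ altogether; the price is that you get $x^p+O(\ln x)$ rather than the sharper $\sim x^p$ with identified lower-order behaviour, but that is precisely what the statement requires. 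Your verification that the splitting at $2x_0$ works uniformly in $p>1$ (via $2^p\gr p+1$ and $\varphi'(2x_0)=(2^{p-1}-1)t>0$) is clean and correct, and the separate treatment of $p=1$ by plugging $t=1-1/x$ into the explicit $\Lambda(t)=-\ln(1-t^2)$ is also fine.
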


Note that the measure $\nu_p$ is admissible for all $1\ls p<\infty $; it satisfies condition (ii-3) if $p=1$
and condition (ii-2) for all $1<p<\infty$. Therefore, Theorem~\ref{th:p-intro} implies that if $K_N$ is the convex hull
of $N>n$ independent random vectors $\vec{X}_1,\ldots ,\vec{X}_N$ with
distribution $\nu_p^n$ then the expected measure ${\mathbb E}_{(\nu_p^n)^N}(\nu_p^n(K_N))$ exhibits a sharp threshold at
$N=\exp ((1\pm\epsilon){\mathbb E}_{\nu_p}(\Lambda_{\nu_p}^{\ast})n)$; for any $\delta\in \left(0,\tfrac{1}{2}\right)$
we have that $\lim\limits_{n\to\infty}\varrho(\nu_p^n ,\delta )=0$.

We close this introductory section with a brief review of the history of the problem that we study and related results.
A variant of the question, in which $\mu_n(K_N)$ is replaced by the volume of $K_N$, has been studied
in the case where $\mu $ is compactly supported. Define $$\kappa =\kappa (\mu ):=\frac{1}{2x^{\ast} }\int_{-x^{\ast}
}^{x^{\ast} }\Lambda_{\mu}^{\ast}(x) dx.$$ In \cite{Gatzouras-Giannopoulos-2009} the following threshold for
the expected volume of $K_N$ was established for a large class of compactly supported
distributions $\mu $: For every $\epsilon >0$,
\begin{equation}\label{eq:limsup}\lim_{n\rightarrow\infty }\sup\left\{ (2x^{\ast} )^{-n}
{\mathbb E}(\abs{K_N})\colon N\ls \exp ((\kappa -\epsilon)n)\right\}=0\end{equation} and
\begin{equation}\label{eq:liminf}\lim_{n\rightarrow\infty}\inf\left \{ (2x^{\ast} )^{-n} {\mathbb E}(\abs{K_N})\colon N\gr  \exp
((\kappa +\epsilon )n)\right\}=1.\end{equation}
This result generalized the work of Dyer, F\"{u}redi and McDiarmid \cite{DFM} who studied the
following two cases:
\begin{enumerate}
\item[(i)] If $\mu (\{ 1\})=\mu (\{-1\})=\tfrac{1}{2}$ then $\Lambda_{\mu} (t)=\ln (\cosh t)$ and
$\Lambda_{\mu}^{\ast}:(-1,1)\rightarrow {\mathbb R}$ is given by
$$\Lambda_{\mu}^{\ast}(x)=\tfrac{1}{2}(1+x)\ln (1+x)+\tfrac{1}{2}(1-x)\ln (1-x),$$ and the result holds with $\kappa =\ln
2-\tfrac{1}{2}$. This is the case of $\pm 1$ polytopes.
\item[(ii)] If $\mu $ is the uniform distribution
on $[-1,1]$, then $\Lambda_{\mu} (t)=\ln (\sinh t/t)$, and the result holds
with $$\kappa =\int_0^{\infty }\left(\frac{1}{u}-\frac{1}{e^u-1}\right )^2du .$$
\end{enumerate}
The generalization from \cite{Gatzouras-Giannopoulos-2009} states that if $\mu $ is an even, compactly supported, Borel probability
measure on the real line and $0<\kappa(\mu)<\infty$, then \eqref{eq:limsup} holds for every $\epsilon >0$,
and \eqref{eq:liminf} holds for every $\epsilon>0$ provided that the distribution $\mu$ satisfies the
$\Lambda^{\ast }$-condition.

Further sharp thresholds for the volume of various classes of random polytopes appear in \cite{Pivovarov-2007} and \cite{Bonnet-Chasapis-Grote-Temesvari-Turchi-2019}, \cite{Bonnet-Kabluchko-Turchi-2021} where
the same question is addressed for a number of cases where $\vec{X}_i$ have rotationally invariant densities. Exponential
in the dimension upper and lower thresholds are obtained in \cite{Frieze-Pegden-Tkocz-2020} for the case where $\vec{X}_i$
are uniformly distributed in a simplex. General upper and lower thresholds have been obtained by Chakraborti, Tkocz
and Vritsiou in \cite{Chakraborti-Tkocz-Vritsiou-2021} for some general families of distributions; see also \cite{BGP-depth}.

\section{Background and auxiliary results}\label{section:background}

As stated in the introduction, we consider an even Borel probability measure $\mu$ on the real line and a random variable $X$,
on some probability space $(\varOmega,\mathcal{F},P),$ with distribution
$\mu$. In order to avoid trivialities we assume that ${\rm Var}_{\mu}(X)>0$, and in particular that
$p_{\mu}:=\max\{P(X=x):x\in {\mathbb R}\}<1$. Recall that $\mu $ is even if $\mu (-B)=\mu (B)$ for every Borel subset $B$ of ${\mathbb R}$.

For the proof of our main result we have to make a number of additional assumptions on $\mu$. The first one is that there exists $r>0$ such that
\begin{equation}\label{1.0.5}
{\mathbb E}\bigl(e^{tX}\bigr):=\int_\mathbb{R} e^{tx}\, d\mu(x)<\infty
\end{equation}
for all $t\in (-r,r)$. This assumption ensures that $X$ has finite moments of all orders.

We define $x^{\ast} :=\sup\set{x\in\mathbb{R}\colon \mu([x,\infty))>0}$ and
$I_{\mu}:=(-x^{\ast} ,x^{\ast} )$. Note that we may have $x^{\ast}=\infty $. Our second assumption is that if
$x^{\ast}<\infty $ then
\begin{equation}\label{1.0.6}P(X=x^{\ast})=\mu (\{x^{\ast}\})=0.\end{equation}
Let $g(t):= {\mathbb E}\bigl(e^{tX}\bigr)$ for $t\in {\mathbb R}$ and $\Lambda_{\mu}(t):=\ln g (t)$. One can easily check that $\Lambda_{\mu }$ is an even convex function and $\Lambda_{\mu }(0)=0$, therefore, $\Lambda_{\mu }$ is a non-negative function. The assumption \eqref{1.0.5}
implies that the interval $J_{\mu}:=\{\Lambda_{\mu }<\infty\}$ is a non-degenerate symmetric interval, possibly the whole real line. We
define $t^{\ast }=\sup J_{\mu}$. Then, $\Lambda_{\mu }$ is $C^{\infty }$ and strictly convex on $(-t^{\ast},t^{\ast})$ (for the first
assertion see \cite[Section~1.3]{Str} or \cite[Section~2]{Gatzouras-Giannopoulos-2007}; the strict convexity 
of $\Lambda_{\mu}$ follows from the fact that
$\Lambda_{\mu}^{\prime}$ is strictly increasing on $(-t^{\ast},t^{\ast})$, as explained below).

For every $t\in (-t^{\ast},t^{\ast})$ we define the probability measure $P_t$ on $(\varOmega,\mathcal{F})$ by
$$P_t(A):= {\mathbb E}\bigl(e^{tX-\Lambda_{\mu}(t)}\mathds{1}_A\bigr),\qquad A\in\mathcal{F}.$$
Define also $\mu_t(B):=P_t(X\in B)$ for any Borel subset $B$ of $\mathbb{R}$. Since $dP_t=e^{tX-\Lambda_{\mu}(t)}dP$
and ${\mathbb E}_{\mu}(X^ke^{tX})<+\infty $ for all $k\gr 1$ and $t\in J_{\mu}$, we see that $\mu_t$ has finite moments of
all orders. Also, differentiating twice $\Lambda_{\mu}$ and taking into account the definition of $P_t$, we check that 
\begin{equation}\label{eq:second-derivative}{\mathbb E}_t(X)=\Lambda_{\mu}'(t)\quad\hbox{and}\quad
\operatorname{Var}_t(X)=\Lambda_{\mu}''(t),\end{equation}
where ${\mathbb E}_t$ and $\operatorname{Var}_t$ denote expectation and variance with respect to $P_t$.
Notice that $P_0=P$ and $\mu_0=\mu$. Since $\mu$ is non-degenerate we have that $\mu_t(\{c\})\neq 1$
for all $c\in {\mathbb R}$ and $t\in (-t^{\ast},t^{\ast})$, which implies that
$\Lambda_{\mu}^{\prime\prime}(t)>0$ for all $t\in (-t^{\ast},t^{\ast})$. It follows that $\Lambda_{\mu}^{\prime}$
is strictly increasing and since $\Lambda_{\mu}^{\prime}(0)=0$ we conclude that $\Lambda_{\mu}$ is strictly
increasing on $[0,t^{\ast})$.

Let $m :[0,x^{\ast})\to [0,\infty )$ be defined by
$$m (x)=-\ln \mu ([x,\infty )).$$
It is clear that $m$ is non-decreasing. Observe that, from Markov's inequality, for any $x\in (0,x^{\ast} )$
and any $t\gr 0$, we have ${\mathbb E}\bigl(e^{tX}\bigr)\gr e^{tx}\mu ([x,\infty ))$, and hence,
\begin{equation}\label{eq:Lm}\Lambda_{\mu }(t)\gr tx-m(x).\end{equation}

An important case where \eqref{1.0.5} is satisfied is when $\mu $ is log-concave. Recall that a
Borel measure $\mu$ on $\mathbb R$ is called log-concave if $\mu(\lambda A+(1-\lambda)B) \gr \mu(A)^{\lambda}\mu(B)^{1-\lambda}$
for all compact subsets $A$ and $B$ of ${\mathbb R}$ and any $\lambda \in (0,1)$. A function
$f:\mathbb R \rightarrow [0,\infty)$ is called log-concave if its support $\{f>0\}$ is an interval in ${\mathbb R}$ and the
restriction of $\ln{f}$ to it is concave. Any non-degenerate log-concave probability measure $\mu $ on ${\mathbb R}$ has a log-concave density
$f:=f_{{\mu }}$. Since $f$ has finite positive integral, one can check that there exist constants $A,B>0$ such that $f(x)\ls Ae^{-B|x|}$ for all $x\in {\mathbb R}$ (see \cite[Lemma~2.2.1]{BGVV-book}). In particular, $f$ has finite moments of all orders. We refer to \cite{BGVV-book} for more information on log-concave probability measures.

The next lemma describes the behavior of $\Lambda_{\mu}$ at the endpoints of $J_{\mu}$
for a log-concave probability measure with unbounded support on ${\mathbb R}$ .

\begin{lemma}\label{lem:background-1}Let $\mu $ be an even log-concave probability measure on ${\mathbb R}$ with
$$x^{\ast} =\sup\set{x\in\mathbb{R}\colon \mu([x,\infty))>0}=+\infty .$$ If $J_{\mu}$ is a bounded interval, then $J_{\mu}=(-t^{\ast},t^{\ast})$
for some $t^{\ast}>0$ and $\lim_{t\uparrow t^{\ast}}\Lambda_{\mu }(t)=+\infty $.
\end{lemma}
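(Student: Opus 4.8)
The plan is to exploit the one-dimensional tail bound for log-concave measures together with the variational (Markov) lower bound $\Lambda_\mu(t)\gr tx-m(x)$ from \eqref{eq:Lm}. First I would record that since $\mu$ is non-degenerate log-concave with density $f=f_\mu$, the bound $f(x)\ls Ae^{-B|x|}$ from \cite[Lemma~2.2.1]{BGVV-book} gives $\mu([x,\infty))\ls \frac{A}{B}e^{-Bx}$, hence $m(x)=-\ln\mu([x,\infty))\gr Bx-\ln(A/B)$ for all $x\gr 0$. In particular $m(x)\to\infty$ as $x\to\infty$. On the other hand, $\mu$ has density, so $x\mapsto \mu([x,\infty))$ is continuous and strictly decreasing on $[0,\infty)$ (it is strictly decreasing because the support of $f$ is an interval containing a neighborhood of every point where $f>0$, and $x^\ast=+\infty$), so $m$ is a continuous, strictly increasing bijection from $[0,\infty)$ onto $[0,\infty)$.

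Next I would show $J$ bounded forces $t^\ast:=\sup J>0$ and $J=(-t^\ast,t^\ast)$: by \eqref{1.0.5} $J$ is a non-degenerate symmetric interval, and boundedness gives $t^\ast<\infty$; evenness of $\Lambda_\mu$ makes $J$ symmetric about $0$, and since $\Lambda_\mu$ is finite and strictly convex on the open interval $(-t^\ast,t^\ast)$, the only question is whether the endpoint $t^\ast$ itself lies in $J$. The key step is the lower estimate: fix any $t\in(0,t^\ast)$ and choose $x=x(t)$ with $m(x)=t^2$, say (any choice with $m(x)/t\to\infty$ and $x\to\infty$ as $t\uparrow t^\ast$ will do — such $x$ exists and tends to $\infty$ by the previous paragraph). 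Then \eqref{eq:Lm} gives
$$\Lambda_\mu(t)\gr tx(t)-m(x(t))=tx(t)-t^2.$$
Since $m(x(t))=t^2$ and $m(x)\gr Bx-\ln(A/B)$, we get $x(t)\ls \frac{t^2+\ln(A/B)}{B}$, which is not strong enough on its own; so instead I would argue from the \emph{lower} bound on $m$ differently, or more cleanly: pick $x(t)$ so that $m(x(t))=\alpha t$ for a constant $\alpha>0$ to be optimized. Then $\Lambda_\mu(t)\gr tx(t)-\alpha t=t\,(x(t)-\alpha)$, and $x(t)\to\infty$ as $t\uparrow t^\ast$ (because $m(x(t))=\alpha t\to\alpha t^\ast$ would be bounded — so this choice is wrong too).

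The correct and simplest route: for each $t\in(0,t^\ast)$ apply \eqref{eq:Lm} with a \emph{single fixed large} $x$. Namely, fix $x_0>0$; then for all $t\gr 0$, $\Lambda_\mu(t)\gr tx_0-m(x_0)$. Letting $t\uparrow t^\ast$ gives $\liminf_{t\uparrow t^\ast}\Lambda_\mu(t)\gr t^\ast x_0-m(x_0)$. Now $\Lambda_\mu$ is increasing on $[0,t^\ast)$, so $\lim_{t\uparrow t^\ast}\Lambda_\mu(t)$ exists in $(0,\infty]$; call it $L$. Then $L\gr t^\ast x_0-m(x_0)$ for every $x_0>0$. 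Since $m(x_0)\ls Bx_0-\ln(A/B)$ would give the wrong direction, I instead use $m(x_0)=-\ln\mu([x_0,\infty))$ directly: the point is that $t^\ast x_0-m(x_0)\to\infty$ as $x_0\to\infty$ provided $m(x_0)/x_0\to$ something strictly less than $t^\ast$. But one always has $m(x)/x\to t^\ast$ for log-concave $\mu$ when $J$ is bounded — indeed this is essentially the content one is proving. The clean fix is to note $\liminf_{x\to\infty}m(x)/x\ls t^\ast$ is automatic, but to get divergence we need a \emph{strict} gap, which fails at the critical rate. Therefore the honest argument must instead use that $m(x)/x\to t^\ast$ together with convexity to push $\Lambda_\mu$ to $+\infty$: if $L<\infty$ then $\Lambda_\mu(t)\ls L$ for all $t<t^\ast$, so $\Lambda_\mu(t)/t\ls L/t^\ast<t^\ast$ eventually, whereas by \eqref{eq:Lm}, $\Lambda_\mu(t)\gr \sup_{x}(tx-m(x))=t\cdot\sup_x(x-m(x)/t)$, and choosing $x$ with $m(x)=\varepsilon t$ for small $\varepsilon>0$ (possible, $x\to\infty$) gives $\Lambda_\mu(t)\gr t(x-\varepsilon)$ with $x\to\infty$, contradiction. \textbf{The main obstacle} is precisely making this last step rigorous: one must choose, for each $t$ close to $t^\ast$, a point $x=x_t\to\infty$ for which $m(x_t)$ is small relative to $t x_t$ — concretely, let $x_t$ solve $m(x_t)=t$ (so $x_t\to\infty$ since $m\to\infty$), giving $\Lambda_\mu(t)\gr tx_t-m(x_t)=t(x_t-1)\to\infty$ as $t\uparrow t^\ast$, which completes the proof since $\Lambda_\mu$ is increasing on $[0,t^\ast)$; the verification that $x_t\to\infty$ uses only that $m$ is an increasing bijection of $[0,\infty)$ onto $[0,\infty)$, established in the first paragraph from the log-concave tail bound.
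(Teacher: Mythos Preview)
Your final argument contains a fatal error. You define $x_t$ by $m(x_t)=t$ and claim $x_t\to\infty$ as $t\uparrow t^\ast$. But $t^\ast<\infty$ by hypothesis, and you yourself established that $m$ is a strictly increasing continuous bijection onto an unbounded interval; hence $x_t=m^{-1}(t)\to m^{-1}(t^\ast)<\infty$. So $x_t$ stays bounded, and the conclusion $t(x_t-1)\to\infty$ fails. (Incidentally, $m(0)=-\ln\mu([0,\infty))=\ln 2$, not $0$, so the range of $m$ is $[\ln 2,\infty)$.)

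The deeper problem is that the Markov bound $\Lambda_\mu(t)\gr tx-m(x)$ alone cannot prove the lemma. You correctly identified that you would need $\liminf_{x\to\infty}m(x)/x<t^\ast$ strictly, but in general one only has $m(x)/x\to t^\ast$ with no gap: for the prototype $f(x)=\tfrac{t^\ast}{2}e^{-t^\ast|x|}$ one gets $m(x)=t^\ast x+\text{const}$, so $\sup_x(tx-m(x))$ is bounded for every $t\ls t^\ast$. Thus \eqref{eq:Lm} is too weak here. The paper's proof avoids this by working with the density $f=e^{-q}$ directly: writing $u(x)=(q(x)-q(0))/x$, convexity of $q$ makes $u$ increasing, and boundedness of $J$ forces $u(x)\uparrow t^\ast$. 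Then $\int_0^\infty e^{t^\ast x}f(x)\,dx=e^{-q(0)}\int_0^\infty e^{x(t^\ast-u(x))}\,dx=+\infty$ since the integrand is $\gr 1$ everywhere; monotone convergence gives both $t^\ast\notin J$ and $\Lambda_\mu(t)\to+\infty$. The point is that one needs the \emph{pointwise} inequality $q(x)\ls q(0)+t^\ast x$ (equivalently $f(x)\gr ce^{-t^\ast x}$), not just tail information encoded in $m$.
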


\begin{proof}Let $f$ denote the density of $\mu $. Since $x^{\ast}=+\infty$, we have that
${\rm supp}(\mu)={\mathbb R}$, and hence, $f$ can be written as $f=e^{-q}$, where
$q:{\mathbb R}\to {\mathbb R}$ is an even convex function. By symmetry, it is enough to consider
the convergence of $\Lambda_{\mu }(t)$ for $t>0$.

Note that, since $q$ is even and convex on ${\mathbb R}$, we have $\lim_{x\to +\infty }q(x)=+\infty $ and the function $u(x)=\frac{q(x)-q(0)}{x}$ is increasing on $(0,\infty )$. First we observe that we cannot have $\lim_{x\to\infty }u(x)=\infty $. If this was the case then we would have $\lim_{x\to\infty }\frac{q(x)}{x}=\infty $, and hence
$$\int_0^{\infty }e^{tx}f(x)dx=\int_0^{\infty }e^{x\left(t-\frac{q(x)}{x}\right)}dx<\infty $$
for all $t>0$, i.e. $\Lambda_{\mu}(t)<\infty $ for all $t>0$, which is not our case.

Therefore, since $u$ is increasing, there exists $t^{\ast}>0$ such that
$$\lim_{x\to\infty }u(x)=\lim_{x\to\infty}\frac{q(x)-q(0)}{x}=t^{\ast}.$$
Assume that $0<t<t^{\ast}$. If $\epsilon >0$ satisfies $t+\epsilon <t^{\ast}$ then there exists $M>0$ such that $u(x)-t>\epsilon $
for all $x\gr M$ and then
$$\int_0^{\infty }e^{tx}f(x)dx=e^{-q(0)}\int_0^{\infty }e^{-x(u(x)-t)}dx<\infty ,$$
which shows that $t\in J_{\mu}$, and hence $(-t^{\ast},t^{\ast})\subseteq J_{\mu}$.

On the other hand, if $t=t^{\ast}$ then using the fact that $u(x)\ls t^{\ast}$ for all $x>0$ we get
$$\int_0^{\infty }e^{t^{\ast}x}f(x)dx=e^{-q(0)}\int_0^{\infty }e^{x(t^{\ast}-u(x))}dx=+\infty .$$
This shows that $J_{\mu}=(-t^{\ast},t^{\ast})$.

Finally, if we consider a strictly increasing sequence $t_n\to t^{\ast}$ then by the monotone convergence
theorem we get
$$e^{\Lambda_{\mu }(t_n)}=\int_0^{\infty }e^{t_nx}f(x)dx\longrightarrow \int_0^{\infty }e^{t^{\ast}x}f(x)dx=+\infty ,$$
which shows that $\lim_{t\uparrow t^{\ast}}\Lambda_{\mu }(t)=+\infty $.
\end{proof}

\begin{definition}\label{def:admissible}\rm Let $\mu $ be an even probability measure on ${\mathbb R}$. We will call
$\mu$ {\it admissible} if it satisfies \eqref{1.0.5} and \eqref{1.0.6}, as well as one of the following conditions:
\begin{enumerate}
\item[{\rm (i)}] $\mu $ is compactly supported, i.e. $x^{\ast }<+\infty $.
\item[{\rm (ii)}] $x^{\ast }=+\infty $ and $\{\Lambda_{\mu}<\infty\}={\mathbb R}$.
\item[{\rm (iii)}] $x^{\ast }=+\infty $, $\{\Lambda_{\mu}<\infty\}$ is bounded and $\mu$ is log-concave.
\end{enumerate}
Note that if $x^{\ast}<+\infty$ then $\{\Lambda_{\mu}<\infty\}={\mathbb R}$. Taking also into account
Lemma~\ref{lem:background-1} we see that, in all the cases that we consider, the interval $J_{\mu}=\{\Lambda_{\mu}<\infty\}$
is open, i.e. $J_{\mu}=(-t^{\ast},t^{\ast})$ where $t^{\ast}=\sup\,J_{\mu}$.
\end{definition}

The next lemma describes the behavior of $\Lambda_{\mu}^{\prime}$ for an admissible measure $\mu $.
The first case was treated in \cite{Gatzouras-Giannopoulos-2009}.

\begin{lemma}\label{lem:background-2}
Let $\mu $ be an admissible even Borel probability measure on the real line.
Then, $\Lambda_{\mu}^{\prime }:J_{\mu}\to I_{\mu}$ is strictly increasing and surjective. In particular,
$$\lim_{t\to\pm t^{\ast}}\Lambda_{\mu}'(t)=\pm x^{\ast} .$$
\end{lemma}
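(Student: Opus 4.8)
The plan is to reduce the statement to the single limit $\lim_{t\uparrow t^{\ast}}\Lambda_{\mu}'(t)=x^{\ast}$ and then read off everything else. Since $\Lambda_{\mu}$ is even, $\Lambda_{\mu}'$ is odd, so $\lim_{t\downarrow -t^{\ast}}\Lambda_{\mu}'(t)=-x^{\ast}$ follows at once; and strict monotonicity of $\Lambda_{\mu}'$ on $J=(-t^{\ast},t^{\ast})$ is already available, because $\Lambda_{\mu}$ is $C^{\infty}$ there with $\Lambda_{\mu}''>0$. Once the limit is known, $\Lambda_{\mu}'$ is a continuous, strictly increasing function on the open interval $(-t^{\ast},t^{\ast})$, so its image is the open interval whose endpoints are the two one-sided limits, namely $(-x^{\ast},x^{\ast})=I$ (with the obvious reading when $x^{\ast}=\infty$); by the intermediate value theorem $\Lambda_{\mu}'\colon J\to I$ is onto. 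Thus the whole proof comes down to computing one limit.

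Put $L:=\lim_{t\uparrow t^{\ast}}\Lambda_{\mu}'(t)$, which exists in $(0,+\infty]$ because $\Lambda_{\mu}'$ is increasing and $\Lambda_{\mu}'(0)=0$. First I would check $L\ls x^{\ast}$: this is vacuous if $x^{\ast}=\infty$, and if $x^{\ast}<\infty$ then $X\ls x^{\ast}$ $\mu$-a.s., hence $P_t$-a.s.\ as well, so $\Lambda_{\mu}'(t)={\mathbb E}_t(X)\ls x^{\ast}$ by \eqref{eq:second-derivative} and $L\ls x^{\ast}$ on letting $t\uparrow t^{\ast}$. It remains to rule out $L<x^{\ast}$. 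Assume it; then in particular $L<\infty$, so $\Lambda_{\mu}'(s)\ls L$ for every $s\in(0,t^{\ast})$, and integrating (with $\Lambda_{\mu}(0)=0$) gives the crucial bound
\[\Lambda_{\mu}(t)\ls Lt\qquad\text{for all }t\in[0,t^{\ast}).\]

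Now I would separate the admissibility cases. If $\mu$ is of type (i) or (ii) then $t^{\ast}=+\infty$ (compact support forces $J={\mathbb R}$, and in (ii) this is assumed). Choose $a$ with $L<a<x^{\ast}$; then $\mu([a,\infty))>0$, so $m(a)=-\ln\mu([a,\infty))<\infty$, and \eqref{eq:Lm} gives $\Lambda_{\mu}(t)\gr ta-m(a)$ for all $t\gr 0$. Together with the displayed bound this yields $(a-L)\,t\ls m(a)$ for every $t\gr 0$, impossible since $a>L$. If $\mu$ is of type (iii) then $t^{\ast}<\infty$ and $x^{\ast}=\infty$, and Lemma~\ref{lem:background-1} gives $\lim_{t\uparrow t^{\ast}}\Lambda_{\mu}(t)=+\infty$; but the displayed bound gives $\Lambda_{\mu}(t)\ls Lt^{\ast}<\infty$ on $[0,t^{\ast})$, again a contradiction. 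Hence $L=x^{\ast}$ in every case.

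The only real care needed is the bookkeeping in the case split: one must use that $t^{\ast}=\infty$ precisely in cases (i) and (ii) (so that \eqref{eq:Lm} can be pushed to all $t$), while case (iii) instead relies on Lemma~\ref{lem:background-1} to supply the blow-up of $\Lambda_{\mu}$ itself. The compactly supported case (i) is the one treated in \cite{Gatzouras-Giannopoulos-2009}; the other two are new but follow the same scheme.
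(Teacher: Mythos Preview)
Your proof is correct and uses the same key ingredients as the paper: inequality \eqref{eq:Lm} for cases (i) and (ii), and Lemma~\ref{lem:background-1} for case (iii). The only difference is organizational: you compute the limit $L$ first (via the linear bound $\Lambda_{\mu}(t)\ls Lt$ obtained by integrating $\Lambda_{\mu}'$) and then read off surjectivity by the intermediate value theorem, whereas the paper shows surjectivity directly in cases (i)--(ii) by locating, for each $x\in(0,x^{\ast})$, a critical point of $q_x(t)=tx-\Lambda_{\mu}(t)$; in case (iii) the two arguments are essentially identical.
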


\begin{proof}We have already explained that, since $(\Lambda_{\mu}')'(t)=\Lambda_{\mu}''(t)=\operatorname{Var}_t(X)> 0$, the function
$\Lambda_{\mu}'$ is strictly increasing. Now, we consider the three cases of the lemma separately.

\smallskip

(i) From the inequality $-x^{\ast} e^{tX}\ls  X e^{tX}\ls x^{\ast} e^{tX},$ which holds with
probability $1$ for each fixed $t,$ and the formula $\Lambda_{\mu}'(t)={\mathbb E}\bigl(X e^{tX}\bigr)/{\mathbb E}\bigl(e^{tX}\bigr)$, we easily check
that $\Lambda_{\mu}'(t)\in (-x^{\ast} ,x^{\ast}  )$ for every $t\in {\mathbb R}$.

It remains to show that $\Lambda_{\mu}'$ is onto $I_{\mu}$. Let $x\in (0,x^{\ast}
)$ and $y\in (x,x^{\ast})$. Since $\Lambda_{\mu}(t)\gr ty-m(y)$ for all $t\gr 0$,
we have that $\Lambda_{\mu}\bigl( m(y)/(y-x)\bigr) \gr xm(y)/(y-x)$. It follows that if we consider
the function $q_x(t):=tx-\Lambda_{\mu}(t)$, then $q_x(0)=0$ and $q_x\bigl(
m(y)/(y-x)\bigr)\ls 0$. Since $q_x$ is concave and $q_x'(0)=x>0$,
this shows that $q_x$ attains its maximum at some point in the
open interval $\bigl(0,m(y)/(y-x)\bigr)$, and hence, $\Lambda_{\mu}'(t)=x$
for some $t$ in this interval. The same argument applies for all
$x\in (-x^{\ast} ,0)$.  Finally, for $x=0$ we have that
$\Lambda_{\mu}'(0)=x$.

\smallskip

(ii) We apply the same argument as in (i).

\smallskip

(iii) Assume that $\Lambda_{\mu}^{\prime}$ is bounded from above. Then, there exists $x>0$ such that $\Lambda_{\mu}^{\prime}(t)< x$
for all $t\in J_{\mu}$. We consider the function $q_x:J_{\mu}\to {\mathbb R}$ with $q_x(t)=tx-\Lambda_{\mu}(t)$. Then, $q_x$ is strictly
increasing. However, $\lim_{t\uparrow t^{\ast }}q_x(t)=-\infty $ because $\lim_{t\uparrow t^{\ast }}\Lambda_{\mu}(t)=+\infty $
by Lemma~\ref{lem:background-1}, which leads to a contradiction.\end{proof}

Let $\mu $ be an admissible even Borel probability measure on the real line. Lemma~\ref{lem:background-2} allows
us to define $h\colon I_{\mu}\to J_{\mu}$ by $h:=(\Lambda_{\mu}')^{-1}$. Observe that $h$ is a strictly increasing $C^\infty$ function and
\begin{equation}\label{eq:h-prime}h'(x)=\frac{1}{\Lambda_{\mu}''(h(x))}.\end{equation}
Next, consider the Legendre transform of $\Lambda_{\mu}$. This is the function
$$\Lambda_{\mu}^{\ast }(x):=\sup\set{tx-\Lambda_{\mu}(t)\colon t\in\mathbb{R}},\qquad
x\in\mathbb{R}.$$
In fact, since $tx-\Lambda_{\mu}(t)<0$ for $t<0$ when $x\in [0,x^{\ast})$, we have that
$\Lambda_{\mu}^{\ast }(x)=\sup\{tx-\Lambda_{\mu} (t)\colon t\gr 0\}$ in this case, and similarly
$\Lambda_{\mu}^{\ast }(x):=\sup\{tx-\Lambda_{\mu} (t)\colon t\ls 0\}$ when $x\in (-x^{\ast},0]$.

The basic properties of $\Lambda_{\mu}^{\ast }$ are described in the next lemma
(for a proof, see e.g. \cite[Proposition~2.12]{Gatzouras-Giannopoulos-2007}).

\begin{lemma}\label{lem:2.5}Let $\mu $ be an admissible even probability measure on ${\mathbb R}$. Then,
\begin{enumerate}
\item[{\rm (i)}]  $\Lambda_{\mu}^{\ast }\gr 0$, $\Lambda_{\mu}^{\ast }(0)=0$ and
$\Lambda_{\mu}^{\ast }(x)=\infty$ for $x\in\mathbb{R}\setminus [-x^{\ast} ,x^{\ast}
]$.
\item[{\rm (ii)}] For every $x\in I_{\mu}$ we have
$\Lambda_{\mu}^{\ast }(x)=tx-\Lambda_{\mu}(t)$ if and only if $\Lambda_{\mu}'(t)=x;$ hence
$$\Lambda_{\mu}^{\ast }(x)=x h(x)-\Lambda_{\mu}(h(x)) \qquad\text{for}\ x\in I_{\mu} .$$
\item[{\rm (iii)}] $\Lambda_{\mu}^{\ast }$ is a strictly convex
$C^\infty$ function on $I_{\mu},$ and $$(\Lambda_{\mu}^{\ast })'(x)=h(x).$$
\item[{\rm (iv)}] $\Lambda_{\mu}^{\ast }$ attains its unique minimum on
$I_{\mu}$ at $x=0$.
\item[{\rm (v)}] $\Lambda_{\mu }^{\ast }(x)\ls m(x)$ for all $x\in [0,x^{\ast})$; this is a direct consequence of \eqref{eq:Lm}.
\end{enumerate}
\end{lemma}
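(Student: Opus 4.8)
The statement is a compendium of standard convex-analytic facts about the Legendre transform, and the one device that powers the whole proof is the remark that, for fixed $x\in I$, the function $q_x(t):=tx-\Lambda_{\mu}(t)$ is concave on ${\mathbb R}$, equals $-\infty$ outside $J=(-t^{\ast},t^{\ast})$, is $C^{\infty}$ on $J$ with $q_x'(t)=x-\Lambda_{\mu}'(t)$, and that $\Lambda_{\mu}^{\ast}(x)=\sup_{t}q_x(t)$. I would treat the parts in order. Part (i): $\Lambda_{\mu}^{\ast}(x)\gr q_x(0)=-\Lambda_{\mu}(0)=0$ is immediate, and $\Lambda_{\mu}^{\ast}(0)=\sup_t(-\Lambda_{\mu}(t))=-\inf_t\Lambda_{\mu}(t)=0$ since $\Lambda_{\mu}\gr 0$ and $\Lambda_{\mu}(0)=0$. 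When $x^{\ast}<\infty$ and $x>x^{\ast}$ one has $X\ls x^{\ast}$ almost surely (because $\mu((x^{\ast},\infty))=\lim_k\mu([x^{\ast}+\tfrac1k,\infty))=0$), hence $g(t)\ls e^{tx^{\ast}}$ and $\Lambda_{\mu}(t)\ls tx^{\ast}$ for $t\gr 0$, so $q_x(t)\gr t(x-x^{\ast})\to\infty$ and $\Lambda_{\mu}^{\ast}(x)=\infty$; the case $x<-x^{\ast}$ is symmetric, and when $x^{\ast}=\infty$ there is nothing to prove.

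For part (ii), fix $x\in I$. Being concave and $C^{\infty}$ on $J$, the function $q_x$ attains its supremum over $J$ precisely at the points where $q_x'(t)=0$, i.e. where $\Lambda_{\mu}'(t)=x$; if $J$ is bounded, the values $q_x(t)=-\infty$ for $t\notin J$ do not affect the supremum, so the supremum over $J$ coincides with that over ${\mathbb R}$. By Lemma~\ref{lem:background-2}, $\Lambda_{\mu}':J\to I$ is a strictly increasing bijection, so there is exactly one such $t$, namely $t=h(x)$, which gives both the equivalence claimed and the formula $\Lambda_{\mu}^{\ast}(x)=q_x(h(x))=xh(x)-\Lambda_{\mu}(h(x))$.

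Part (iii) then follows by differentiating this formula and using $\Lambda_{\mu}'(h(x))=x$:
$$(\Lambda_{\mu}^{\ast})'(x)=h(x)+\bigl(x-\Lambda_{\mu}'(h(x))\bigr)h'(x)=h(x),$$
after which \eqref{eq:h-prime} gives $(\Lambda_{\mu}^{\ast})''(x)=h'(x)=1/\Lambda_{\mu}''(h(x))>0$, so $\Lambda_{\mu}^{\ast}$ is strictly convex, and $C^{\infty}$ because $h$ is. For (iv), $h$ is strictly increasing with $h(0)=(\Lambda_{\mu}')^{-1}(0)=0$ (as $\Lambda_{\mu}$ is even, $\Lambda_{\mu}'(0)=0$), so $(\Lambda_{\mu}^{\ast})'$ is negative on $(-x^{\ast},0)$ and positive on $(0,x^{\ast})$, forcing the unique minimum at $0$ (consistent with $\Lambda_{\mu}^{\ast}(0)=0$ from (i)). For (v), \eqref{eq:Lm} reads $q_x(t)\ls m(x)$ for all $t\gr 0$, while for $t<0$ and $x\in[0,x^{\ast})$ one has $q_x(t)\ls -\Lambda_{\mu}(t)\ls 0\ls m(x)$; taking the supremum over $t$ yields $\Lambda_{\mu}^{\ast}(x)\ls m(x)$.

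The only place where anything beyond elementary convexity is used is the surjectivity of $\Lambda_{\mu}':J\to I$ in part (ii): this is exactly Lemma~\ref{lem:background-2}, and it is there that the three cases of Definition~\ref{def:admissible} (together with Lemma~\ref{lem:background-1} for the log-concave case) enter. Granting that input, I do not expect any genuine obstacle, since concavity of $q_x$ upgrades "critical point" to "global maximum" automatically and the remaining items are differentiations of the identity $\Lambda_{\mu}^{\ast}=xh(x)-\Lambda_{\mu}(h(x))$.
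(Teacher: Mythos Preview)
Your proof is correct and follows the standard convex-analytic route. The paper itself does not give a proof of this lemma at all: it simply refers the reader to \cite[Proposition~2.12]{Gatzouras-Giannopoulos-2007}. Your argument is essentially the one that would appear there, resting on strict concavity of $q_x$ together with the surjectivity of $\Lambda_{\mu}':J\to I$ from Lemma~\ref{lem:background-2}, and it handles all five items cleanly.
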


\begin{corollary}\label{cor:background-1}We have that $\lim_{x\uparrow x^{\ast}}\Lambda_{\mu}^{\ast }(x)=+\infty $.
\end{corollary}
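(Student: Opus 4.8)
\emph{Proof proposal.} My plan is to reduce the statement to showing that a monotone limit is infinite, and then to treat the unbounded and the compactly supported cases separately. Since $\Lambda_{\mu}^{\ast}$ is convex on $I$ and attains its minimum there at $0$ (Lemma~\ref{lem:2.5}(iv)), it is non-decreasing on $[0,x^{\ast})$; hence $L:=\lim_{x\uparrow x^{\ast}}\Lambda_{\mu}^{\ast}(x)$ exists in $(0,+\infty]$ (positivity being clear since $(\Lambda_{\mu}^{\ast})'=h>0$ on $(0,x^{\ast})$ by Lemma~\ref{lem:2.5}(iii)), and it suffices to rule out $L<\infty$. Everywhere below I use the trivial bound $\Lambda_{\mu}^{\ast}(x)\gr tx-\Lambda_{\mu}(t)$, valid for all $x\in I$ and $t\in\mathbb{R}$ straight from the definition of the Legendre transform.

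In the cases $x^{\ast}=+\infty$ (that is, (ii) and (iii) of Definition~\ref{def:admissible}) the conclusion is immediate: picking any $t_0\in (0,t^{\ast})$ we get $\Lambda_{\mu}^{\ast}(x)\gr t_0x-\Lambda_{\mu}(t_0)\to+\infty$ as $x\to+\infty$, so $L=+\infty$. (Equivalently, by Lemma~\ref{lem:2.5}(iii), $(\Lambda_{\mu}^{\ast})'(x)=h(x)\gr t_0$ for $x\gr\Lambda_{\mu}'(t_0)$, so $\Lambda_{\mu}^{\ast}$ grows at least linearly.)

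The substantive case is $x^{\ast}<+\infty$, and it is here that assumption \eqref{1.0.6} is needed. Letting $x\uparrow x^{\ast}$ in the trivial bound gives $L\gr tx^{\ast}-\Lambda_{\mu}(t)$ for every $t>0$. Assume, towards a contradiction, that $L<\infty$. Then $\Lambda_{\mu}(t)\gr tx^{\ast}-L$, i.e.\ $g(t)=e^{\Lambda_{\mu}(t)}\gr e^{-L}e^{tx^{\ast}}$, and therefore
$$\int_{\mathbb{R}}e^{t(x-x^{\ast})}\,d\mu(x)=e^{-tx^{\ast}}g(t)\gr e^{-L}>0\qquad\text{for every }t>0.$$
On the other hand, since $\mu((x^{\ast},\infty))=0$ by the definition of $x^{\ast}$, as $t\to\infty$ the integrand $e^{t(x-x^{\ast})}$ decreases $\mu$-a.e.\ to $\mathds{1}_{\{x^{\ast}\}}(x)$ and stays bounded by $1$; by dominated convergence the integral converges to $\mu(\{x^{\ast}\})$, which equals $0$ by \eqref{1.0.6}. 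This contradiction forces $L=+\infty$.

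I expect the only genuine obstacle to be the compactly supported case: one cannot conclude directly from the fact that $h(x)=(\Lambda_{\mu}^{\ast})'(x)\to t^{\ast}=+\infty$ as $x\uparrow x^{\ast}$ (Lemma~\ref{lem:background-2}), because a convex increasing function on $[0,x^{\ast})$ may have derivative tending to $+\infty$ at $x^{\ast}$ while remaining bounded (e.g.\ $x\mapsto\sqrt{x^{\ast}}-\sqrt{x^{\ast}-x}$). The right mechanism is the one above: $L<\infty$ would force $g(t)$ to grow like $e^{tx^{\ast}}$, which is incompatible with $\mu$ having no atom at $x^{\ast}$ — precisely the content of \eqref{1.0.6}.
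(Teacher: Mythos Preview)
Your proof is correct and follows essentially the same approach as the paper: both split into $x^{\ast}=+\infty$ (handled by a linear/convexity lower bound) and $x^{\ast}<+\infty$, and in the compact case both hinge on the dominated convergence computation $e^{-tx^{\ast}}g(t)=\int e^{t(x-x^{\ast})}\,d\mu\to\mu(\{x^{\ast}\})=0$. The only cosmetic difference is that the paper first shows $\Lambda_{\mu}^{\ast}(x^{\ast})=+\infty$ and then invokes lower semi-continuity of $\Lambda_{\mu}^{\ast}$, whereas you pass to the limit $x\uparrow x^{\ast}$ directly in the bound $\Lambda_{\mu}^{\ast}(x)\gr tx-\Lambda_{\mu}(t)$ and argue by contradiction.
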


\begin{proof}If $x^{\ast}=+\infty $ then the convexity of $\Lambda_{\mu}^{\ast}$ and the fact that
$(\Lambda_{\mu}^{\ast })^{\prime}(x)>0$ for all $x>0$ (which is a consequence of Lemma~\ref{lem:2.5}\,(iv)
and of the fact that $(\Lambda_{\mu}^{\ast})^{\prime\prime}=h^{\prime}>0$)
imply that $\lim_{x\uparrow x^{\ast }}\Lambda_{\mu}^{\ast }(x)=+\infty $.

Next, assume that $x^{\ast}<+\infty $. Since $\Lambda_{\mu}^\prime(t)\ls x^{\ast} $ for all
$t$, the function $t\mapsto tx^{\ast} -\Lambda_{\mu}(t)$ is non-decreasing.
Therefore, $$\Lambda_{\mu}^{\ast }(x^{\ast} )=\sup\limits_{t\in {\mathbb R}}
[tx^{\ast} -\Lambda_{\mu}(t)]= \lim\limits_{t\uparrow\infty} [tx^{\ast}
-\Lambda_{\mu}(t)].$$ However, $$\lim\limits_{t\uparrow\infty}
e^{-(tx^{\ast}-\Lambda_{\mu}(t))}=\lim\limits_{t\uparrow\infty}
e^{-tx^{\ast} }g(t) = \lim\limits_{t\uparrow\infty}
{\mathbb E}\bigl(e^{t(X-x^{\ast} )}\bigr) =
{\mathbb E}\!\left(\lim\limits_{t\uparrow\infty}e^{t(X-x^{\ast} )}\right) =
P(X=x^{\ast} ),$$ the third equality being a
consequence of the dominated convergence theorem.  It follows that
$\Lambda_{\mu}^{\ast }(x^{\ast} )=-\ln P(X=x^{\ast})=+\infty $. Since $\Lambda_{\mu}^{\ast}$
is lower semi-continuous on ${\mathbb R}$ as the pointwise supremum of the continuous functions
$x\mapsto tx-\Lambda_{\mu}(t)$, $t\in {\mathbb R}$, it follows that $\lim_{x\uparrow x^{\ast }}\Lambda_{\mu}^{\ast }(x)=+\infty $.\end{proof}

The next result generalizes an observation from \cite{BGP-threshold} which states that $\Lambda_{\mu}^{\ast }$ has finite moments
of all orders in the case where $\mu $ is absolutely continuous with respect to Lebesgue measure. The more general statement of the
next proposition can be found as an exercise in \cite{Deuschel-Stroock-book}.

\begin{proposition}\label{prop:expectation-1}Let $\mu $ be an even probability measure on ${\mathbb R}$. Then,
$$\int_{I_{\mu}}e^{\Lambda_{\mu}^{\ast }(x)/2}d\mu(x)\ls 4.$$
In particular, for all $p\gr 1$ we have that $\int_{I_{\mu}}(\Lambda_{\mu}^{\ast }(x))^p\,d\mu(x)<+\infty $.
\end{proposition}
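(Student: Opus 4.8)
The plan is to bound $\int_I e^{\Lambda_{\mu}^{\ast}(x)/2}\,d\mu(x)$ by a layer-cake (distribution function) argument, using the key inequality $\Lambda_{\mu}^{\ast}(x)\le m(x)=-\ln\mu([x,\infty))$ from Lemma~\ref{lem:2.5}(v), together with the symmetry of $\mu$. First I would split the integral over $I$ into the contributions from $[0,x^{\ast})$ and $(-x^{\ast},0]$; by evenness these are equal, so it suffices to show $\int_{[0,x^{\ast})}e^{\Lambda_{\mu}^{\ast}(x)/2}\,d\mu(x)\le 2$. On $[0,x^{\ast})$ the function $\Lambda_{\mu}^{\ast}$ is non-negative, so $e^{\Lambda_{\mu}^{\ast}(x)/2}\ge 1$, and I can write
$$\int_{[0,x^{\ast})}e^{\Lambda_{\mu}^{\ast}(x)/2}\,d\mu(x)=\int_{[0,x^{\ast})}\left(1+\int_0^{\Lambda_{\mu}^{\ast}(x)/2}e^{s}\,ds\right)d\mu(x)\le \mu([0,x^{\ast}))+\int_0^{\infty}e^{s}\,\mu\!\left(\set{x\in[0,x^{\ast})\colon \Lambda_{\mu}^{\ast}(x)\gr 2s}\right)ds,$$
by Tonelli's theorem.

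Next I would estimate the tail probability $\mu(\{x\in[0,x^{\ast})\colon \Lambda_{\mu}^{\ast}(x)\ge 2s\})$. Since $\Lambda_{\mu}^{\ast}$ is non-decreasing on $[0,x^{\ast})$ (it is convex with minimum at $0$), the set $\{x\in[0,x^{\ast})\colon \Lambda_{\mu}^{\ast}(x)\ge 2s\}$ is of the form $[x_s,x^{\ast})$ for the threshold value $x_s:=\inf\{x\in[0,x^{\ast})\colon \Lambda_{\mu}^{\ast}(x)\ge 2s\}$ (with the convention that the set is empty, contributing zero, if no such $x$ exists). By lower semicontinuity of $\Lambda_{\mu}^{\ast}$ we get $\Lambda_{\mu}^{\ast}(x_s)\ge 2s$ whenever $x_s<x^{\ast}$, hence by Lemma~\ref{lem:2.5}(v),
$$\mu([x_s,x^{\ast}))=\mu([x_s,\infty))=e^{-m(x_s)}\le e^{-\Lambda_{\mu}^{\ast}(x_s)}\le e^{-2s}.$$
Plugging this in gives $\int_0^{\infty}e^{s}\cdot e^{-2s}\,ds=\int_0^{\infty}e^{-s}\,ds=1$, and together with $\mu([0,x^{\ast}))\le 1$ this yields $\int_{[0,x^{\ast})}e^{\Lambda_{\mu}^{\ast}(x)/2}\,d\mu(x)\le 2$, so the full integral over $I$ is at most $4$, as claimed.

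For the "in particular" assertion, I would simply note that for any fixed $p\ge 1$ there is a constant $C_p$ with $y^p\le C_p e^{y/2}$ for all $y\ge 0$, so $\int_I(\Lambda_{\mu}^{\ast}(x))^p\,d\mu(x)\le C_p\int_I e^{\Lambda_{\mu}^{\ast}(x)/2}\,d\mu(x)\le 4C_p<\infty$. The only slightly delicate point — and the main thing to get right — is the monotonicity/level-set step: one must justify that $\{x\in[0,x^{\ast})\colon\Lambda_{\mu}^{\ast}(x)\ge 2s\}$ is an interval with left endpoint $x_s$ and that $\Lambda_{\mu}^{\ast}(x_s)\ge 2s$, which uses convexity of $\Lambda_{\mu}^{\ast}$ on $I$ (Lemma~\ref{lem:2.5}(iii)), its vanishing at $0$, and lower semicontinuity (as established in the proof of Corollary~\ref{cor:background-1}); the case $x^{\ast}=\infty$ needs no change since $m$ and $\Lambda_{\mu}^{\ast}$ are still defined on all of $[0,\infty)$. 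Everything else is a routine application of Tonelli and the bound $\Lambda_{\mu}^{\ast}\le m$.
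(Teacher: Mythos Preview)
Your proof is correct and reaches the same constant~$4$, but by a different route than the paper's. The paper proceeds via a Stieltjes integration-by-parts identity (citing Hewitt--Stromberg): with $\alpha(x)=\mu((x,z])$ and $\beta(x)=e^{\Lambda_\mu^{\ast}(x)/2}$ it writes $\int_0^z\beta\,d\mu = -\int_0^z\beta\,d\alpha$, converts this to $\int_0^z\alpha(x-)\,d\beta(x)+\text{boundary}$, applies the Chernoff bound $\alpha(x-)\ls e^{-\Lambda_\mu^{\ast}(x)}=\beta(x)^{-2}$, and evaluates $\int\beta^{-2}\,d\beta\ls 1$. You instead run a layer-cake/Tonelli argument and bound the level sets directly via the same Chernoff inequality $\mu([x,\infty))\ls e^{-\Lambda_\mu^{\ast}(x)}$. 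Your version is more elementary in that it avoids the Riemann--Stieltjes machinery; the paper's version has the virtue of not needing to analyze the level-set geometry of $\Lambda_\mu^{\ast}$.

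Two small points worth tightening. First, the step $\mu([x_s,x^{\ast}))=\mu([x_s,\infty))$ is an inequality in general (the proposition does not assume $\mu(\{x^{\ast}\})=0$), but $\ls$ is all you need. Second, ``lower semicontinuity'' is not the right justification for $\Lambda_\mu^{\ast}(x_s)\gr 2s$: lower semicontinuity gives $\Lambda_\mu^{\ast}(x_s)\ls\liminf$, which is the wrong direction. What you actually use is that $\Lambda_\mu^{\ast}$, being convex on $I$ as a supremum of affine functions, is \emph{continuous} on $I$; together with monotonicity on $[0,x^{\ast})$ this makes $\{\Lambda_\mu^{\ast}\gr 2s\}$ closed and gives $\Lambda_\mu^{\ast}(x_s)\gr 2s$. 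With these two cosmetic fixes the argument is complete.
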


\begin{proof}[Sketch of the proof] We define $F(x)=\mu((-\infty ,x])$ and for any fixed $z>0$ we set $\alpha(x)=F(x)-F(z)$
and $\beta(x)=\exp (I(x)/2)$ where $I(x)=0$ if $x\ls 0$ and $I(x)=\Lambda_{\mu}^{\ast}(x)$ if $x>0$. Note that $\alpha $ is right
continuous and increasing, and $\beta $ is increasing. Applying \cite[Theorem~21.67\,(iv)]{Hewitt-Stromberg-book} we write
$$\int_0^z\beta(x)d\alpha(x)+\int_0^z\alpha(x-)d\beta(x)=\alpha(z)e^{I(z+)/2}-\alpha(0-)e^{I(0-)/2},$$
where, for a function $f$, we denote $f(x+)=\lim\limits_{y\to x^+}f(y)$ and $f(x-)=\lim\limits_{y\to x^-}f(y)$. 
It follows that, for every $0<z<x^{\ast}$,
\begin{align*}\int_0^ze^{\Lambda_{\mu}^{\ast}(x)/2}d\mu(x) &= \int_0^z\beta(x)d\alpha(x)
=-\int_0^z\alpha(x-)d\beta(x)+\alpha(z)e^{I(z+)/2}-\alpha(0-)e^{I(0-)/2}\\
&\ls \int_0^ze^{-I(x)}d\beta(x)+1,
\end{align*}
where we have used the fact that $-\alpha(x-)=\mu([x,z])\ls e^{-\Lambda_{\mu}^{\ast}(x)}$ and $I(0-)=0$, $-\alpha(0-)\ls 1$.
Finally, we note that
$$\int_0^ze^{-I(x)}d\beta(x)+1=\int_0^z\beta(x)^{-2}d\beta(x)+1\ls \int_1^{\infty}t^{-2}dt+1=2,$$
because $\beta$ is strictly increasing and continuous on $[0,z]$ and $\beta(0)=1$. The result follows by symmetry.\end{proof}

We close this section by recalling the $\Lambda^{\ast}$-condition that was already mentioned in the introduction.

\begin{definition}\rm Let $\mu $ be an admissible even Borel probability measure on the real line. Recall that
$\Lambda_{\mu }^{\ast }(x)\ls m(x)$ for all $x\in [0,x^{\ast})$. We shall say that $\mu$ {\it satisfies the
$\Lambda^{\ast }$-condition} if
$$\lim\limits_{x\uparrow x^{\ast}}\dfrac{m(x)}{\Lambda_{\mu}^{\ast }(x)}=1.$$
\end{definition}

\section{Proof of the main theorem}\label{section:main}

Let $\mu $ be an admissible even Borel probability measure on the real line. Recall that $\mu_n=\mu\otimes\cdots\otimes\mu$ ($n$ times),
and hence the support of $\mu_n$ is $I_{\mu_n}=I_{\mu}^n$. The logarithmic Laplace transform of $\mu_n$ is defined by
\begin{equation*}\Lambda_{\mu_n}(\xi )=\ln\Big(\int_{{\mathbb R}^n}e^{\langle\xi
,z\rangle }d\mu_n (z)\Big),\qquad \xi\in {\mathbb R}^n\end{equation*}
and the Cramer transform of $\mu_n$ is the Legendre transform of $\Lambda_{\mu_n}$, defined by
\begin{equation*}\Lambda_{\mu_n}^{\ast }(x)= \sup_{\xi\in {\mathbb R}^n} \left\{ \langle x, \xi\rangle -\Lambda_{\mu_n}(\xi )\right\},
\qquad x\in {\mathbb R}^n.\end{equation*} Since $\mu_n$ is a product measure, we can easily check that
$\Lambda_{\mu_n}^{\ast }(x)=\sum_{i=1}^n\Lambda_{\mu}^{\ast }(x_i)$ for all $x=(x_1,\ldots ,x_n)\in I_{\mu_n}$, which implies that
$$\int_{I_{\mu_n}}e^{\Lambda_{\mu_n}^{\ast }(x)/2}d\mu_n(x)
=\prod_{i=1}^n\left(\int_{I_{\mu}}e^{\Lambda_{\mu}^{\ast }(x_i)/2}d\mu(x_i)\right)<+\infty .$$
In particular, for all $p\gr 1$ we have that $\int_{I_{\mu_n}}(\Lambda_{\mu_n}^{\ast }(x))^p\,d\mu_n(x)<+\infty $.
We also define the parameter
\begin{equation}\label{eq:beta}\beta(\mu_n)=\frac{{\rm Var}_{\mu_n }(\Lambda_{\mu_n}^{\ast })}{({\mathbb E}_{\mu_n }(\Lambda_{\mu_n }^{\ast }))^2}.\end{equation}
Since $\mu_n=\mu\otimes\cdots\otimes\mu$, we have ${\rm Var}_{\mu_n}(\Lambda_{\mu_n}^{\ast})=n{\rm Var}_{\mu}(\Lambda_{\mu}^{\ast})$
and ${\mathbb E}_{\mu_n}(\Lambda_{\mu}^{\ast })=n{\mathbb E}_{\mu}(\Lambda_{\mu}^{\ast })$.  Therefore,
$$\beta(\mu_n)=\frac{{\rm Var}_{\mu_n}(\Lambda_{\mu_n}^{\ast})}{({\mathbb E}_{\mu_n }(\Lambda_{\mu_n}^{\ast }))^2}
=\frac{\beta(\mu)}{n},$$
where $\beta(\mu)$ is a finite positive constant which is independent of $n$. In particular,
$\beta(\mu_n)\to 0$ as $n\to\infty$.

In order to estimate $\varrho_i(\mu_n,\delta)$, $i=1,2$, we shall follow the approach of
\cite{BGP-threshold}. For every $r>0$ we define
\begin{equation*}B_r(\mu_n):=\{x\in{\mathbb R}^n:\Lambda^{\ast}_{\mu_n}(x)\ls r\}.\end{equation*}
Note that, since $\Lambda_{\mu_n}^{\ast}(x)=\sum_{i=1}^n\Lambda_{\mu}^{\ast}(x_i)$ for all $x=(x_1,\ldots ,x_n)$
and $\Lambda_{\mu}^{\ast}(y)$ increases to $+\infty $ as $y\uparrow x^{\ast}$, for every $r>0$ there exists $0<M_r<x^{\ast}$
such that $B_r(\mu_n)\subseteq [-M_r,M_r]^n\subseteq I_{\mu}^n$, and hence $B_r(\mu_n)$ is a compact subset of $I_{\mu}^n$.

For any $x\in {\mathbb R}^n$ we denote by ${\cal H}(x)$ the set
of all half-spaces $H$ of ${\mathbb R}^n$ containing $x$. Then we define
$$\varphi_{\mu_n} (x)=\inf\{\mu_n (H):H\in {\cal H}(x)\}.$$
The function $\varphi_{\mu_n}$ is called Tukey's half-space depth. We refer the reader to the survey article of Nagy, Sch\"{u}tt and Werner
\cite{Nagy-Schutt-Werner-2019} for a comprehensive account and references. We start with the upper threshold. Note that
the $\Lambda^{\ast}$-condition is not required for this result.

\begin{theorem}\label{th:upper-mun}Let $\mu $ be an even probability measure on ${\mathbb R}$.
Then, for any $\delta\in \left(0,\tfrac{1}{2}\right)$ there exist $c(\mu,\delta)>0$ and $n_0(\mu,\delta )\in {\mathbb N}$ such that
$$\varrho_1(\mu_n ,\delta )\gr \left(1-\frac{c(\mu,\delta)}{\sqrt{n}}\right)\mathbb{E}_{\mu}(\Lambda_{\mu}^{*})$$
for all $n\gr n_0(\mu,\delta)$.
\end{theorem}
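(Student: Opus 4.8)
The plan is to prove a lower bound on $\varrho_1(\mu_n,\delta)$ by exhibiting a concrete value of $\varrho_1$ for which $\sup\{\mathbb{E}_{\mu_n^N}[\mu_n(K_N)]:N\ls\exp(\varrho_1 n)\}\ls\delta$, then taking the supremum. Since $K_N\subseteq{\mathbb R}^n$ is the convex hull of $N$ sample points, a crude but effective bound is that a point $x$ lies in $K_N$ only if every halfspace through $x$ contains at least one of the $\vec{X}_i$; more usefully, if $x\notin K_N$ then by separation there is a halfspace $H\ni x$ with $\mu_n(H\cap K_N)=0$, i.e.\ all $N$ points miss $H$. Hence for a fixed $x$,
\[
\operatorname{Prob}(x\in K_N)\ls \text{(something controlled by }\varphi_{\mu_n}(x)\text{)}.
\]
The standard route (following \cite{BGP-threshold} and \cite{DFM}) is: for $x$ with small Tukey depth $\varphi_{\mu_n}(x)=\varepsilon$, the probability that $x\in K_N$ is small when $N\varepsilon$ is small; more precisely one uses that if $x\notin\operatorname{int}K_N$ then some halfspace of $\mu_n$-measure $\ls\varphi_{\mu_n}(x)$ is avoided by all points — but to make this a union bound one needs a net of halfspaces. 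The cleaner inequality to invoke is the classical one: $\mathbb{E}_{\mu_n^N}[\mu_n(K_N)]\ls \mu_n(\{\varphi_{\mu_n}\gr t\}) + (\text{combinatorial factor})\cdot(1-t)^{N}$ is not quite it either; rather, I would use Fubini,
\[
\mathbb{E}_{\mu_n^N}[\mu_n(K_N)] = \int_{{\mathbb R}^n}\operatorname{Prob}(x\in K_N)\,d\mu_n(x),
\]
split the integral over $\{\varphi_{\mu_n}\gr t\}$ and its complement, bound the first piece by $\mu_n(\{\varphi_{\mu_n}\gr t\})$ and the second using that $\operatorname{Prob}(x\in K_N)$ is governed by a binomial tail in $N$ and $\varphi_{\mu_n}(x)$.

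**Next I would** connect Tukey depth on ${\mathbb R}^n$ to the one-dimensional Cramér transform. The key geometric fact is that for a product measure, a lower bound on $\varphi_{\mu_n}(x)$ is obtained from coordinate halfspaces, but an \emph{upper} bound — which is what controls the ``bad'' set where $x$ is likely in $K_N$ — requires choosing good separating directions. The relevant estimate, which I expect appears in \cite{BGP-threshold} or \cite{BGP-depth}, is of the form
\[
\varphi_{\mu_n}(x)\ls \exp\!\bigl(-\Lambda_{\mu_n}^{\ast}(x)\bigr)=\exp\!\Bigl(-\sum_{i=1}^n\Lambda_{\mu}^{\ast}(x_i)\Bigr),
\]
coming from the large-deviation upper bound (Chernoff) for half-spaces: for a unit vector $\theta$ and the halfspace $H=\{y:\langle y,\theta\rangle\gr\langle x,\theta\rangle\}$ one has $\mu_n(H)\ls e^{-\Lambda_{\mu_n}^{\ast,\theta}(x)}$ and optimizing over $\theta$ together with the product structure gives the displayed bound. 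Granting this, the ``bad'' set is contained in $\{\Lambda_{\mu_n}^{\ast}\ls r\}=B_r(\mu_n)$ for a suitable $r$ depending on $N$, and the ``good'' complement has $\mu_n$-measure that is small once $r$ is large.

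**Then the core computation** is to balance the two terms. Take $r=r(N)$ with $e^{-r}\cdot N$ small, say $N\ls e^{(1-\eta)r}$ roughly, so that for $x\notin B_r(\mu_n)$ (where $\varphi_{\mu_n}(x)\ls e^{-r}$) the probability $\operatorname{Prob}(x\in K_N)$ is tiny; and simultaneously control $\mu_n(B_r(\mu_n))=\mu_n(\{\Lambda_{\mu_n}^{\ast}\ls r\})$. Here is where the concentration of $\Lambda_{\mu_n}^{\ast}$ enters: since $\Lambda_{\mu_n}^{\ast}=\sum_{i=1}^n\Lambda_{\mu}^{\ast}(X_i)$ is a sum of i.i.d.\ terms with mean ${\mathbb E}_\mu(\Lambda_\mu^{\ast})$ and variance ${\rm Var}_\mu(\Lambda_\mu^{\ast})<\infty$ (Proposition~\ref{prop:expectation-1}), Chebyshev gives
\[
\mu_n\bigl(\{\Lambda_{\mu_n}^{\ast}\gr r\}\bigr)\ls \frac{n\,{\rm Var}_\mu(\Lambda_\mu^{\ast})}{\bigl(r-n{\mathbb E}_\mu(\Lambda_\mu^{\ast})\bigr)^2}
\]
for $r>n{\mathbb E}_\mu(\Lambda_\mu^{\ast})$. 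Choosing $r=\bigl(1-\tfrac{c}{\sqrt n}\bigr)n{\mathbb E}_\mu(\Lambda_\mu^{\ast})$ makes $\mu_n(B_r(\mu_n))$ itself large (it is the complement $\{\Lambda_{\mu_n}^{\ast}\ls r\}$ that is the good set here — I must get the direction right: the bad set where $x$ is likely in $K_N$ is $B_r$ with $r$ \emph{large}, so I want $r=(1+c/\sqrt n)n{\mathbb E}_\mu(\Lambda_\mu^{\ast})$ and then $\mu_n(B_r^c)=\mu_n(\{\Lambda_{\mu_n}^{\ast}>r\})\ls\beta(\mu)/c^2$ by Chebyshev, which is $\ls\delta/2$ for $c=c(\mu,\delta)$ large). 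With this $r$, the constraint becomes $N\ls\exp((1-c/\sqrt n)\,{\mathbb E}_\mu(\Lambda_\mu^{\ast})\,n)$ after absorbing the slack needed to kill the binomial term, yielding exactly the asserted bound $\varrho_1(\mu_n,\delta)\gr(1-c(\mu,\delta)/\sqrt n)\,{\mathbb E}_\mu(\Lambda_\mu^{\ast})$ for $n\gr n_0(\mu,\delta)$.

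**The main obstacle** I anticipate is the union-bound step turning the pointwise estimate $\operatorname{Prob}(x\in K_N)\ls$ (binomial tail in $\varphi_{\mu_n}(x)$) into a bound on $\int\operatorname{Prob}(x\in K_N)\,d\mu_n(x)$ over the unbounded ``good'' region, because naively $x\in K_N$ is not simply ``some fixed halfspace is empty'' — the separating halfspace depends on the sample. The standard fix is the Dyer–Füredi–McDiarmid / Bárány–Larman type argument: discretize the sphere of directions, or use that $\mathbb{E}[\mu_n(K_N)]=\operatorname{Prob}(\vec X_{N+1}\in K_N)$ (an extra independent sample lies in the hull of the first $N$), and then $\vec X_{N+1}\in\operatorname{conv}\{\vec X_1,\dots,\vec X_N\}$ fails whenever all of $\vec X_1,\dots,\vec X_N$ lie in the halfspace $\{y:\langle y-\vec X_{N+1},\theta\rangle\ls 0\}$ for the optimal Tukey direction $\theta$ at $\vec X_{N+1}$; conditioning on $\vec X_{N+1}=x$ and using independence gives $\operatorname{Prob}(\vec X_{N+1}\notin K_N\mid \vec X_{N+1}=x)\gr(1-\varphi_{\mu_n}(x))^N$. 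This cleanly yields
\[
\mathbb{E}_{\mu_n^N}[\mu_n(K_N)]\ls \mu_n\bigl(B_r(\mu_n)\bigr)+\bigl(1-e^{-r}\bigr)^{-0}\cdots
\]
— more precisely $\mathbb{E}[\mu_n(K_N)]\ls \int(1-(1-\varphi_{\mu_n})^N)\,d\mu_n\ls\mu_n(\{\varphi_{\mu_n}\gr e^{-r}\})+N e^{-r}$, and $\{\varphi_{\mu_n}\gr e^{-r}\}\subseteq B_r(\mu_n)$ is the bad set with $r$ large, $Ne^{-r}$ the term killed by the logarithmic slack. Getting the halfspace large-deviation bound $\varphi_{\mu_n}(x)\ls e^{-\Lambda_{\mu_n}^{\ast}(x)}$ with the right constant (it is exactly Chernoff, no constant) and verifying the measurability/optimality details for the Tukey direction are the only genuinely technical points; everything else is the Chebyshev computation above.
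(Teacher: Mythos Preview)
Your approach is essentially identical to the paper's: both derive the inequality
\[
\mathbb{E}_{\mu_n^N}[\mu_n(K_N)]\ls \mu_n(B_r(\mu_n))+Ne^{-r}
\]
from the Chernoff bound $\varphi_{\mu_n}(x)\ls e^{-\Lambda_{\mu_n}^{\ast}(x)}$ together with $\operatorname{Prob}(x\in K_N)\ls 1-(1-\varphi_{\mu_n}(x))^N\ls N\varphi_{\mu_n}(x)$, and then balance the two terms via Chebyshev on the i.i.d.\ sum $\Lambda_{\mu_n}^{\ast}=\sum_i\Lambda_{\mu}^{\ast}(X_i)$. Your final paragraph has this right, and the ``extra sample'' derivation of the pointwise bound is exactly the mechanism behind \eqref{eq:upper-1}.

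The one genuine slip is the direction in the Chebyshev step. You need $\mu_n(B_r(\mu_n))=\mu_n(\{\Lambda_{\mu_n}^{\ast}\ls r\})$ to be \emph{small}, and since $\Lambda_{\mu_n}^{\ast}$ concentrates around $T_n=n\,\mathbb{E}_{\mu}(\Lambda_{\mu}^{\ast})$, this forces $r$ \emph{below} the mean, not above it. Your self-correction to $r=(1+c/\sqrt{n})T_n$ gives $\mu_n(B_r)\to 1$, which is useless; your first instinct $r=(1-c/\sqrt{n})T_n$ was the correct one. Concretely, the paper takes $r=(1-\zeta)T_n$ with $\zeta=\sqrt{2\beta(\mu)/(n\delta)}$, so that Chebyshev gives $\mu_n(B_r)\ls \beta(\mu)/(n\zeta^2)=\delta/2$, and then restricts to $N\ls e^{(1-2\zeta)T_n}$ so that $Ne^{-r}\ls e^{-\zeta T_n}\ls\delta/2$ for large $n$. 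With the direction fixed, your argument is complete and matches the paper line for line.
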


\begin{proof}The standard approach towards an upper threshold is based on the next fact which holds true in general, for
any Borel probability measure on ${\mathbb R}^n$. For every $r>0$ and every $N>n$ we have
\begin{equation}\label{eq:upper-1}{\mathbb E}_{\mu_n^N}(\mu_n (K_N))\ls \mu_n (B_r(\mu_n ))+ N\exp (-r).\end{equation}
This estimate appeared originally in \cite{DFM} and follows from the observation that
(by the definition of $\varphi_{\mu_n}$, Markov's inequality and the definition of $\Lambda_{\mu_n}^{\ast}$)
for every $x\in {\mathbb R}^n$ we have
\begin{equation}\label{eq:upper-2}\varphi_{\mu_n } (x)\ls\exp (-\Lambda_{\mu_n }^{\ast }(x)).
\end{equation}
We use \eqref{eq:upper-1} in the following way. Let $T_1:={\mathbb E}_{\mu}(\Lambda_{\mu}^{\ast})$ and
$T_n:=\mathbb{E}_{\mu_n}(\Lambda_{\mu_n}^{*})=T_1n$. Then, for all $\zeta\in (0,1)$,
from Chebyshev's inequality we have that
\begin{align*}
\mu_n(\{\Lambda_{\mu_n}^{*}\ls T_n -\zeta T_n  \})&\ls \mu_n(\{|\Lambda_{\mu_n}^{*}-T_n |\gr \zeta T_n \})
	\ls\frac{\mathbb{E}_{\mu_n}|\Lambda_{\mu_n}^{*}-T_n|^2}{\zeta^2T_n^2}=\frac{\beta(\mu_n)}{\zeta^2}=\frac{\beta(\mu)}{\zeta^2n}.
\end{align*}
Equivalently,
$$\mu_n (B_{(1-\zeta )T_n }(\mu_n))\ls \frac{\beta(\mu)}{\zeta^2n}.$$
Let $\delta\in\left(0,\tfrac{1}{2}\right)$. We may find $n_0(\mu,\delta)$ such that if $n\gr n_0(\mu,\delta)$ then $8\beta(\mu)/n<\delta <1/2$.
We choose $\zeta =\sqrt{2\beta(\mu)/n\delta }$, which implies that
$$\mu (B_{(1-\zeta )T_n}(\mu_n))\ls \frac{\delta }{2}.$$
From \eqref{eq:upper-1} we see that
\begin{align*}\sup\{ {\mathbb E}_{\mu_n^N}(\mu_n(K_N)):N\ls e^{(1-2\zeta )T_n} \}
&\ls \mu_n(B_{(1-\zeta)T_n}(\mu_n))+e^{(1-2\zeta )T_n }e^{-(1-\zeta)T_n}\\
&\ls \frac{\delta }{2}+e^{-\zeta T_n}\ls\delta ,
\end{align*}
provided that $\zeta T_n\gr \ln (2/\delta )$. Since $T_n=T_1n$, the last condition takes the form
$T_1n\gr c_1\ln (2/\delta )\sqrt{\delta n/\beta(\mu)}$ and it is certainly satisfied
if $n\gr n_0(\mu)$, where $n_0(\mu)$ depends only on $\beta(\mu)$ because $\sqrt{\delta}\ln(2/\delta )$ is
bounded on $\left(0,\tfrac{1}{2}\right)$. By the choice of $\zeta $ we conclude that
$$\varrho_1(\mu_n ,\delta )\gr \left(1-\sqrt{8\beta(\mu)/n\delta }\right)\mathbb{E}_{\mu}(\Lambda_{\mu}^{*})$$
as claimed.\end{proof}

For the proof of the lower threshold we need a basic fact that plays a main role in the proof of all
the lower thresholds that have been obtained so far. For a proof see \cite[Lemma~4.1]{Gatzouras-Giannopoulos-2009}.

\begin{lemma}\label{lem:inclusion}For every Borel subset $A$ of ${\mathbb R}^n$ we have that
$$1-\mu_n^N(K_N\supseteq A)\ls \binom{N}{n}p_{\mu}^{N-n}+2\binom{N}{n}\left (1-\inf_{x\in A}\varphi_{\mu_n }(x)\right)^{N-n}.$$
where $p_{\mu}=\max\{P(X=x):x\in {\mathbb R}\}<1$. Therefore,
\begin{equation}\label{eq:inclusion}{\mathbb E}_{\mu_n^N}\,[\mu_n (K_N)]\gr \mu_n (A)\left (1-\binom{N}{n}p_{\mu}^{N-n}-2\binom{N}{n}\left (1-\inf_{x\in A}\varphi_{\mu_n }(x)\right)^{N-n}\right).\end{equation}
\end{lemma}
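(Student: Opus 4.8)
Write $\phi_A:=\inf_{x\in A}\varphi_{\mu_n}(x)$. The plan is to prove the first displayed inequality, namely $\mu_n^N(A\not\subseteq K_N)\le\binom Nn p^{N-n}+2\binom Nn(1-\phi_A)^{N-n}$, and then deduce \eqref{eq:inclusion} from it: pointwise one has $\mu_n(K_N)\ge\mu_n(A)\mathds{1}_{\{K_N\supseteq A\}}$ (trivially when $A\not\subseteq K_N$, and by monotonicity otherwise), so taking $\mathbb{E}_{\mu_n^N}$ gives $\mathbb{E}_{\mu_n^N}[\mu_n(K_N)]\ge\mu_n(A)\,\mu_n^N(K_N\supseteq A)\ge\mu_n(A)\bigl(1-\binom Nn p^{N-n}-2\binom Nn(1-\phi_A)^{N-n}\bigr)$, which is \eqref{eq:inclusion}. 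To bound $\mu_n^N(A\not\subseteq K_N)$ I would split according to whether the random points $\vec X_1,\dots,\vec X_N$ affinely span $\mathbb{R}^n$, and call $D$ the degenerate event that they do not.

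I would first dispose of $D$. The basic observation is that $\mu_n$ assigns measure at most $p$ to any hyperplane $\{z\in\mathbb{R}^n:\sum_i a_iz_i=c\}$ (fix $i_0$ with $a_{i_0}\neq0$ and condition on the remaining coordinates: the $i_0$-th is then pinned to a single value, of $\mu$-mass $\le p$), and hence also to any affine subspace of dimension $\le n-1$. On $D$, if one picks an $n$-subset $\sigma\subseteq\{1,\dots,N\}$ that contains a maximal affinely independent subset of $\{\vec X_1,\dots,\vec X_N\}$, then $\operatorname{aff}\{\vec X_i:i\in\sigma\}$ contains all the points; so $D$ is contained in the union over $n$-subsets $\sigma$ of the events $D_\sigma$ that $\vec X_j\in\operatorname{aff}\{\vec X_i:i\in\sigma\}$ for every $j\notin\sigma$. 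Conditioning on $(\vec X_i)_{i\in\sigma}$, the event $D_\sigma$ has conditional probability $\mu_n(\operatorname{aff}\{\vec X_i:i\in\sigma\})^{N-n}\le p^{N-n}$, so $\mu_n^N(D)\le\binom Nn p^{N-n}$ by a union bound.

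On $D^c$ the polytope $K_N$ is full-dimensional, hence equals the intersection of the closed halfspaces supported by its facet hyperplanes; so if $x\in A$ and $x\notin K_N$, then $x$ violates one of these facet inequalities — there is a facet hyperplane $\mathcal H$ such that the closed halfspace $H$ bounded by $\mathcal H$ and containing $K_N$ does not contain $x$. The facet $\mathcal H\cap K_N$ is $(n-1)$-dimensional and its vertices are among $\vec X_1,\dots,\vec X_N$, so $n$ affinely independent ones of them, indexed by some $\sigma$, span $\mathcal H$. Consequently $\{A\not\subseteq K_N\}\cap D^c$ is contained in the union, over $n$-subsets $\sigma$ and over the two closed halfspaces $H_\sigma^{\pm}$ bounded by $\operatorname{aff}\{\vec X_i:i\in\sigma\}$, of the events on which $\{\vec X_i:i\in\sigma\}$ is affinely independent, $\vec X_j\in H_\sigma^{\pm}$ for all $j\notin\sigma$, and $A\not\subseteq H_\sigma^{\pm}$. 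Conditioning on $(\vec X_i)_{i\in\sigma}$, on the event that these are affinely independent the hyperplane and the halfspaces $H_\sigma^{\pm}$ are determined; if moreover $A\not\subseteq H_\sigma^{+}$, choose $x\in A\setminus H_\sigma^{+}$, so $x$ lies in the open complementary halfspace — then a slightly shrunk closed halfspace still contains $x$ and is disjoint from $H_\sigma^{+}$, whence $\mu_n(\mathbb{R}^n\setminus H_\sigma^{+})\ge\varphi_{\mu_n}(x)\ge\phi_A$ and so $\mu_n(H_\sigma^{+})\le 1-\phi_A$. Thus the conditional probability that the remaining $N-n$ points all land in $H_\sigma^{+}$ is at most $(1-\phi_A)^{N-n}$; summing over the $\binom Nn$ choices of $\sigma$ and the two orientations yields $\mu_n^N(\{A\not\subseteq K_N\}\cap D^c)\le2\binom Nn(1-\phi_A)^{N-n}$, and adding the bound on $\mu_n^N(D)$ completes the proof.

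The step I expect to be the main obstacle is the geometric reduction of the third paragraph: converting the uncountable event ``$x\notin K_N$ for some $x\in A$'' into a finite union of product-form events indexed by $n$-subsets and two orientations, which requires the separating-hyperplane theorem together with the facet description of a full-dimensional polytope, plus a careful treatment of the boundary hyperplane so that the exponent base is exactly $1-\phi_A$ rather than something weaker like $1-\phi_A+p$. The degenerate term, by contrast, is routine once one has the estimate $\mu_n(\text{hyperplane})\le p$ and the trick of extending a maximal affinely independent subset to an $n$-subset.
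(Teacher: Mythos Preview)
Your argument is correct and is precisely the standard proof of this lemma. Note that the paper does not actually prove it: the statement is quoted with the remark ``For a proof see \cite[Lemma~4.1]{Gatzouras-Giannopoulos-2009}'', so there is nothing in the paper to compare against. Your decomposition into the degenerate event $D$ (handled via the bound $\mu_n(\text{hyperplane})\le p$ and a union over $n$-subsets) and the full-dimensional event $D^c$ (handled via the facet description of $K_N$, a union over $n$-subsets and two orientations, and the half-space depth bound $\mu_n(H_\sigma^{\pm})\le 1-\phi_A$) is exactly the approach of the cited reference, which in turn goes back to Dyer--F\"uredi--McDiarmid.
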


We are going to apply Lemma~\ref{lem:inclusion} with  $A=B_{(1+\epsilon )T_n }(\mu_n)$, using Chebyshev's inequality exactly
as in the proof of Theorem~\ref{th:upper-mun}. From \eqref{eq:inclusion} it is clear that we will also need
a lower bound for $\inf_{x\in B_{(1+\epsilon )T_n}(\mu_n)}\varphi_{\mu_n }(x)$ which will imply that
$$2\binom{N}{n}\left (1-\inf_{x\in B_{(1+\epsilon )T_n}(\mu_n)}\varphi_{\mu_n }(x)\right)^{N-n}=o_n(1).$$
The main technical step is to obtain the next inequality.

\begin{theorem}\label{th:product}Let $\mu $ be an admissible even probability measure on ${\mathbb R}$
that satisfies the $\Lambda^{\ast}$-condition, i.e. $m(x)\sim\Lambda_{\mu}^{\ast }(x)$ as $x\uparrow x^{\ast}$.
Then, for every $\zeta >0,$ there exists $n_0(\mu,\zeta)\in\mathbb{N},$ depending only on $\zeta$
and $\mu$, such that for all $r>0$ and all $n\gr n_0(\mu,\zeta)$ we have that
$$\inf_{x\in B_r(\mu_n)}\varphi_{\mu_n }(x)\gr\exp (-(1+\zeta)r-2\zeta n).$$
\end{theorem}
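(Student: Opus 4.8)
The plan is to reduce the half-space depth of a point $x\in B_r(\mu_n)$ to a one-dimensional estimate by exploiting the product structure of $\mu_n$, and then to control the contribution of each coordinate via the $\Lambda^{\ast}$-condition. Fix $x=(x_1,\dots,x_n)\in B_r(\mu_n)$, so $\sum_{i=1}^n\Lambda_{\mu}^{\ast}(x_i)\ls r$; by the observation in the excerpt we may assume $|x_i|\ls M_r<x^{\ast}$ for all $i$. A half-space $H\ni x$ is determined by a unit vector $\theta\in S^{n-1}$ and the condition $H=\{z:\langle z,\theta\rangle\gr\langle x,\theta\rangle\}$, so $\mu_n(H)=\operatorname{Prob}\bigl(\sum_i\theta_iX_i\gr\sum_i\theta_ix_i\bigr)$ where $X_1,\dots,X_n$ are i.i.d.\ with law $\mu$. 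The key step is an exponential (Markov/Chernoff) lower bound: for any $s>0$,
\begin{equation*}
\mu_n(H)\gr e^{-s\langle x,\theta\rangle}\,{\mathbb E}\,e^{s\langle X,\theta\rangle}\cdot(\text{correction})
\end{equation*}
is the wrong direction, so instead I would use the standard \emph{lower} bound for tail probabilities via a tilting/change-of-measure argument: pass to the measure $P_t^{\otimes n}$ with coordinatewise tilt parameters $t_i=h(x_i)$ chosen so that ${\mathbb E}_{t_i}(X_i)=x_i$, i.e.\ so that $x$ becomes the mean of the tilted product measure. Under this tilt, $\operatorname{Prob}(\sum_i\theta_i X_i\gr\sum_i\theta_i x_i)$ is, up to the Radon--Nikodym factor $\exp\bigl(-\sum_i(t_i x_i-\Lambda_{\mu}(t_i))\bigr)=\exp(-\Lambda_{\mu_n}^{\ast}(x))\gr e^{-r}$, comparable to a \emph{centered} tail event which has probability bounded below by a constant (roughly $1/2$ minus a Berry--Esseen error), since $\sum_i\theta_i(X_i-x_i)$ has mean zero under the tilt.

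The technical heart is making the centered tail event have probability bounded below \emph{uniformly in $\theta$ and in the dimension}. Here one cannot simply invoke a CLT because the summands are non-identically distributed and some $\theta_i$ may be tiny; the right tool is a one-sided anticoncentration / Paley--Zygmund-type estimate, or a Berry--Esseen bound for $\sum_i\theta_i(X_i-x_i)$ under $P_{t_i}$, which gives $\operatorname{Prob}_{\text{tilt}}(\sum_i\theta_i(X_i-x_i)\gr 0)\gr c$ for an absolute constant $c>0$ once $n$ is large, \emph{provided} the variances $\Lambda_{\mu}''(t_i)=\operatorname{Var}_{t_i}(X_i)$ are not too degenerate. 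This is exactly where the $\Lambda^{\ast}$-condition enters, together with the $\epsilon n$ slack in the exponent: the coordinates $i$ with $\Lambda_{\mu}^{\ast}(x_i)$ close to its maximal value (where $|x_i|$ is near $x^{\ast}$ and the tilted variance may collapse) are few — at most $O(r/\log(1/\epsilon))$ of them, say — because each such coordinate eats a definite chunk of the budget $r$; the remaining coordinates have controlled tilted variances, so the Berry--Esseen / Lindeberg argument applies to them, and the few bad coordinates are absorbed into the factor $e^{-2\epsilon n}$ by a crude bound $\varphi(\cdot)\gr(\text{something})^{\#\text{bad}}$. Concretely, I expect the argument to split each $x_i$ into a "good" regime $|x_i|\ls (1-\eta)x^{\ast}$ and a "bad" regime, use the $\Lambda^{\ast}$-condition $m(x_i)\sim\Lambda_{\mu}^{\ast}(x_i)$ to relate the true one-dimensional tail $\mu([x_i,\infty))\gr e^{-(1+\epsilon/2)\Lambda_{\mu}^{\ast}(x_i)}$ for the bad coordinates, and combine these with the tilted-CLT estimate on the good block.

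Assembling: choosing the tilt $t_i=h(x_i)$, one gets
\begin{equation*}
\mu_n(H)\gr e^{-\Lambda_{\mu_n}^{\ast}(x)}\cdot\operatorname{Prob}_{\text{tilt}}\Bigl(\textstyle\sum_i\theta_i(X_i-x_i)\gr 0\Bigr)\gr e^{-r}\cdot c'
\end{equation*}
for the contribution of the good coordinates, while the bad coordinates contribute at most a factor $e^{-\epsilon r}$ from the $(1+\epsilon/2)$ versus $1$ discrepancy in $m$ vs $\Lambda^{\ast}$ plus an $e^{-2\epsilon n}$ loss from crudely bounding their local probabilities below; taking the infimum over $\theta$ (hence over $H\in\mathcal H(x)$) and over $x\in B_r(\mu_n)$ yields $\inf_{x\in B_r(\mu_n)}\varphi_{\mu_n}(x)\gr\exp(-(1+\epsilon)r-2\epsilon n)$ once $n\gr n_0(\mu,\epsilon)$. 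The main obstacle I anticipate is the uniform-in-$\theta$ anticoncentration lower bound for the tilted sum when many $\theta_i$ are small and the corresponding $x_i$ are near $x^{\ast}$ — controlling the tilted variances $\Lambda_{\mu}''(h(x_i))$ from below on the good block, and bounding the number of bad coordinates in terms of $r$, is the delicate counting that the $\Lambda^{\ast}$-condition is designed to make possible.
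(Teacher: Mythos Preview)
Your proposal has a genuine gap at the very start: you treat the half-space direction $\theta$ as arbitrary and hope to prove the lower bound uniformly in $\theta$. With the coordinatewise tilt $t_i=h(x_i)$ the change-of-measure identity reads
\[
P\Bigl(\textstyle\sum_i\theta_i(X_i-x_i)\gr 0\Bigr)
= e^{-\Lambda_{\mu_n}^{\ast}(x)}\,{\mathbb E}_{t}\Bigl[e^{-\sum_i t_i(X_i-x_i)}\,\mathds{1}_{\{\sum_i\theta_i(X_i-x_i)\gr 0\}}\Bigr],
\]
and on the event $\{\sum_i\theta_i(X_i-x_i)\gr 0\}$ there is no control on $\sum_i t_i(X_i-x_i)$ unless $\theta\propto t$. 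So the factorization ``$\mu_n(H)\gr e^{-\Lambda_{\mu_n}^{\ast}(x)}\cdot c$'' that you write down does not follow for general $\theta$, and the uniform-in-$\theta$ anticoncentration problem you flag as the ``main obstacle'' is in fact not solvable along these lines. The paper sidesteps it entirely: a short convexity argument shows that the infimum of $\varphi_{\mu_n}$ over $B_r(\mu_n)$ is attained at a half-space whose bounding hyperplane \emph{supports} $B_r(\mu_n)$ at some boundary point $x$, and there the outer normal is $\nabla\Lambda_{\mu_n}^{\ast}(x)=(h(x_1),\dots,h(x_n))$. Thus $\theta_i\propto t_i=h(x_i)$ automatically, the tilt and the direction coincide, and the change-of-measure gives the clean factor $e^{-\Lambda_{\mu_n}^{\ast}(x)}$ times a tilted probability of $\{\sum_i t_i(X_i-x_i)\gr 0\}$.

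A second, smaller issue is the coordinate split. Your two-way split (``good'' $|x_i|\ls (1-\eta)x^{\ast}$ versus ``bad'') is not enough: within the good block the coordinates with $x_i$ close to $0$ have $t_i=h(x_i)$ close to $0$, so the tilted variances $\sigma_i^2=t_i^2\Lambda_{\mu}''(t_i)$ degenerate and Berry--Esseen gives nothing. The paper uses a \emph{three}-way split $A_1=\{x_i<\tau\}$, $A_2=\{\tau\ls x_i\ls M\}$, $A_3=\{x_i>M\}$: on $A_3$ it applies the $\Lambda^{\ast}$-condition coordinatewise exactly as you suggest; on $A_2$ the tilted variances and third moments are uniformly bounded above and below, so Berry--Esseen yields a constant lower bound for the centered event; and on $A_1$ (where $\Lambda_{\mu}^{\ast}(x_i)\ls\epsilon$) it invokes a separate one-dimensional lemma to pay only $e^{-\epsilon|A_1|}$, which is absorbed into the $e^{-2\epsilon n}$ slack.
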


\begin{proof}Let $x\in B_r(\mu_n)$ and $H_1$ be a closed half-space with $x\in\partial{H_1}$. There exists
$v\in {\mathbb R}^n\setminus\{0\}$ such that $H_1=\{y\in {\mathbb R}^n:\langle v,y-x\rangle\gr 0\}$. Consider
the function $q:B_r(\mu_n)\to {\mathbb R}$, $q(w)=\langle v,w\rangle$. Since $q$ is continuous and
$B_r(\mu_n)$ is compact, $q$ attains its maximum at some point $z\in B_r(\mu_n)$. Define
$H=\{y\in {\mathbb R}^n:\langle v,y-z\rangle\gr 0\}$. Then, $z\in\partial(H)$ and
for every $w\in B_r(\mu_n)$ we have $\langle v,w\rangle\ls \langle v,z\rangle $, which shows that
$\partial(H)$ supports $B_r(\mu_n)$ at $z$. Moreover, $H\subseteq H_1$ and hence
$P(\vec{X}\in H)\ls P(\vec{X}\in H_1)$. This shows that $\inf\{\varphi_{\mu_n}(x):x\in B_r(\mu_n)\}$ is attained
for some closed half-space $H$ whose bounding hyperplane supports $B_r(\mu_n )$.
Therefore, for the proof of the theorem it suffices to show that given $\zeta >0$ we may find
$n_0(\mu,\zeta )$ so that if $n\gr n_0(\mu,\zeta)$ then
\begin{equation}\label{eq:main}P\bigl(\vec{X}\in H\bigr)\gr \exp (-(1+\zeta)r-2\zeta n)\end{equation}
for any closed half-space $H$ whose bounding hyperplane supports
$B_r(\mu_n )$.

Let $H$ be such a half-space. Then, there exists $x\in\partial (B_r(\mu_n ))$ such that $$P\bigl(\vec{X}\in
H\bigr)=P\!\left(\sum\limits_{i=1}^n t_i(X_i-x_i)\gr
0\right),$$ where $t_i=h(x_i)$, because the normal vector to $H$ is $\nabla\Lambda_{\mu_n}^{\ast}(x)$
and $(\Lambda_{\mu}^{\ast})^{\prime}=h$ by Lemma~\ref{lem:2.5}\,(iii). We fix this $x$ for the rest of the proof.
By symmetry and independence we may assume that $x_i\gr 0$
for all $1\ls i\ls n$. Recall that $\Lambda_{\mu}^{\ast }(0)=0$ and that $\mu$ satisfies the $\Lambda^{\ast}$-condition:
we have $m(x)\sim\Lambda_{\mu}^{\ast }(x)$ as $x\uparrow x^{\ast}$. Therefore, we can find $M>\tau >0$ with
the following properties:
\begin{enumerate}
\item[(i)] If $0\ls x\ls \tau $ then $0\ls \Lambda_{\mu}^{\ast }(x)\ls\zeta$.
\item[(ii)] If $M<x<x^{\ast} $ then $P(X\gr x)\gr \exp(-\Lambda_{\mu}^{\ast }(x)(1+\zeta))$.
\end{enumerate}
Set $[n]=\{1,\ldots ,n\}$. We consider the sets of indices
\begin{align*}A_1=A_1(x)&:=\set{i\in [n]:x_i<\tau}\\
A_2=A_2(x)& :=\set{ i\in [n]:\tau \ls x_i\ls M},\\
A_3=A_3(x)&:=\set{i\in [n]:x_i>M}\end{align*}
and the probabilities
$$P_j=P_j(x):=P\!\left(\sum\limits_{i\in A_j}t_i(X_i-x_i)\gr 0\right)\qquad j=1,2,3.$$
By independence we have that
$$P\bigl(\vec{X}\in H\bigr)=P\!\left(\sum\limits_{i=1}^n t_i(X_i-x_i)\gr 0\right)\gr P_1P_2P_3.$$
We will give lower bounds for $P_1$, $P_2$ and $P_3$ separately.

\begin{lemma}\label{lem:product-6}We have that
$$P_1\gr \exp\!\left(-\sum_{i\in A_1}(\Lambda_{\mu}^{\ast }(x_i)+\zeta) -
c_1\ln\abs{A_1}-c_2\right),$$ where $c_1,c_2>0$
depend only on $\zeta $ and $\mu$.
\end{lemma}

\begin{proof}We write
\begin{equation}\label{eq:I1-delta}P_1=P\!\left(\sum\limits_{i\in A_1} t_i(X_i-x_i)\gr
0\right)\gr P\!\left (\sum_{i\in A_1}t_i(X_i-\tau )\gr 0\right
) ,\end{equation} and use the following fact (see \cite[Lemma~4.3]{Gatzouras-Giannopoulos-2009}):
For every $\tau\in (0,x^{\ast}),$ there exists $c(\tau)>0$
depending only on $\tau$ and $\mu,$ such that for any
$k\in\mathbb{N}$ and any $v_1,\ldots,v_k\in\mathbb{R}$ with
$\sum_{i=1}^k v_i>0$ we have that $$P\! \left ( \sum_{i=1}^k
v_i(X_i-\tau)\gr  0\right ) \gr c(\tau)\, k^{-3/2}\, e^{-k
\Lambda_{\mu}^{\ast }(\tau)} .$$
Combining the above with \eqref{eq:I1-delta} and using the simple bound
$\Lambda_{\mu}^{\ast }(\tau )\ls\zeta \ls \Lambda_{\mu}^{\ast}(x)+\zeta $ for $x$ in $[0,\tau]$,
we conclude the proof of the lemma.\end{proof}

\begin{lemma}\label{lem:product-7}We have that
$$P_3\gr \exp\!\left(-(1+\zeta)\sum_{i\in
A_3}\Lambda_{\mu}^{\ast }(x_i)\right ).$$
\end{lemma}

\begin{proof}By independence, we can write
$$P_3=P\!\left(\sum\limits_{i\in A_3} t_i(X_i-x_i)\gr 0\right)\gr \prod_{i\in A_3}P(X_i\gr x_i).$$
By the choice of $M$ we see that
$$P(X_i\gr x_i)\gr e^{-\Lambda_{\mu}^{\ast }(x_i)(1+\zeta )}$$
for all $i\in A_3$, and this immediately gives the lemma.\end{proof}

\begin{lemma}\label{lem:product-10} There exist $c_3,c_4>0$ depending only on $\zeta, M$ and $\mu,$ such that
$$P\!\left(\sum\limits_{i\in A_2} t_i(X_i-x_i)\gr 0\right )\gr\exp\!\left(-\sum_{i\in A_2}
\Lambda_{\mu}^{\ast }(x_i)-c_3\sqrt{|A_2|}-c_4\right).$$
\end{lemma}

The proof of this estimate requires some preparation. Without loss
of generality, we may assume that $A_2=\set{1,\ldots ,k}$ for some
$k\ls n$. Recall that $t_i= h(x_i)$ for each $i$, and that this is
equivalent to having $x_i=\Lambda_{\mu}^{\prime }(t_i)$ for each $i$ (see Lemma~\ref{lem:2.5}\,(ii){}). Define
the probability measure $P_{x_1,\ldots,x_k}$ on
$(\varOmega ,\mathcal{F})$, by
$$P_{x_1,\ldots,x_k}(A) := {\mathbb E}\! \left[ \mathds{1}_A \cdot
\exp\!\left(\sum\limits_{i=1}^k
(t_iX_i-\Lambda_{\mu}(t_i))\right)\right]$$ for
$A\in\mathcal{F}$. Direct computation shows that, under $P_{x_1,\ldots,x_k},$
the random variables $t_1X_1,\ldots,t_kX_k$ are
independent\textup{,} with mean\textup{,} variance and absolute
central third moment given by
\begin{align*} {\mathbb E}_{x_1,\ldots,x_k}(t_iX_i) &=
t_i\Lambda_{\mu}^{\prime }(t_i)=t_ix_i,\\ {\mathbb E}_{x_1,\ldots,x_k}\bigl(
\abs{t_i(X_i-x_i)}^2\bigr) &= t_i^2\Lambda_{\mu}''(t_i) ,\\
{\mathbb E}_{x_1,\ldots,x_k} \bigl(\abs{t_i(X_i-x_i)}^3\bigr) &= \abs{t_i}^3
{\mathbb E}_{t_i}\bigl(\abs{X-\Lambda_{\mu}'(t_i)}^3\bigr),\end{align*} respectively.
Set $\sigma_i^2:=t_i^2\Lambda_{\mu}''(t_i)$, $$s_k^2 := \sum\limits_{i=1}^k
{\mathbb E}_{x_1,\ldots,x_k}\bigl(\abs{t_i(X_i-x_i)}^2\bigr) =
\sum\limits_{i=1}^k t_i^2\Lambda_{\mu}^{\prime\prime }(t_i) =
\sum\limits_{i=1}^k \sigma_i^2$$ and $$S_k
:=\sum\limits_{i=1}^k t_i(X_i-x_i) ,$$ and let
$F_k\colon\mathbb{R}\rightarrow\mathbb{R}$ denote the cumulative
distribution function of the random variable $S_k/s_k$ under the
probability law $P_{x_1,\ldots,x_k}$: $F_k(x) :=
P_{x_1,\ldots,x_k}(S_k\ls x s_k)$ $(x\in\mathbb{R})$. Write also
$\nu_k$ for the probability measure on $\mathbb{R}$ defined by
$\nu_k(-\infty,x] := F_k(x)$ $(x\in\mathbb{R})$. Notice that
${\mathbb E}_{x_1,\ldots,x_k}(S_k/s_k)=0$ and $
\operatorname{Var}_{x_1,\ldots,x_k} (S_k/s_k)=1$.

\begin{lemma}\label{lem:product-8}
The following identity holds\textup{:}
$$P\!\left(\sum\limits_{i=1}^k t_i(X_i-x_i)\gr 0\right) =
\left(\int_{[0,\infty)} e^{-s_ku}\, d\nu_k(u)\right)\, \exp\!\left(-
\sum_{i=1}^k \Lambda_{\mu}^{\ast }(x_i)\right) .$$
\end{lemma}

\begin{proof}By definition of the measure
$P_{x_1,\ldots,x_k}$, we have that
\begin{equation*}P\!\left(\sum\limits_{i=1}^k t_i(X_i-x_i)\gr
0\right)=P(S_k\gr 0)
= {\mathbb E}_{x_1,\ldots,x_k}\! \left[
\mathds{1}_{[0,\infty)}(S_k)\cdot \exp\!\left(-\sum_{i=1}^k (t_i
X_i-\Lambda_{\mu}(t_i))\right)\right] .\end{equation*} It follows that
$$P\!\left(\sum\limits_{i=1}^k t_i(X_i-x_i)\gr  0\right) =
\int_{[0,\infty)} e^{-s_ku}\, d\nu_k(u)\cdot \exp\!\left(\sum_{i=1}^k
(\Lambda_{\mu}(t_i) - t_ix_i)\right) ,$$
and the lemma now follows from Lemma~~\ref{lem:2.5}\,(ii). \end{proof}

\medskip

We will also use the following consequence of the Berry-Esseen
theorem (cf.\ \cite{Fe2}, p.\ 544).

\begin{lemma}\label{lem:product-9}For any $a,b>0,$ there exist $k_0\in {\mathbb N}$ and
$\theta >0$ with the following property\textup{:} If $k\gr k_0,$ and
if $Y_1,\ldots ,Y_k$ are independent random variables with
$$\mathbb{E}(Y_i)=0,\quad \sigma_i^2:=\mathbb{E}(Y_i^2)\gr a,\quad
\mathbb{E}(|Y_i|^3)\ls b,$$ then $$\mathbb{P}\left
(0\ls\sum_{i=1}^k Y_i\ls\sigma\right )\gr\theta ,$$
where $\sigma^2=\sigma_1^2+\cdots +\sigma_k^2$.
\end{lemma}

\begin{proof}[Proof of Lemma~$\ref{lem:product-10}$]Consider the random variables $Y_i:=t_i(X_i-x_i)$, $i\in A_2=\{1,\ldots ,k\}$, which are
independent with respect to $P_{x_1,\ldots ,x_k}$ and satisfy ${\mathbb E}_{x_1,\ldots ,x_k}(Y_i)=0$
for all $1\ls i\ls k$. Set $J_{\mu}^{\ast}=(\Lambda_{\mu}^{\prime})^{-1}([\tau , M])$. Since $\tau\ls x_i\ls M$
for all $1\ls i\ls k$, we see that
$$\sigma_i^2={\mathbb E}_{x_1,\ldots,x_k}\,(Y_i^2) = t_i^2\Lambda_{\mu}''(t_i)\gr
\min_{t\in J_{\mu}^{\ast}}t^2\Lambda_{\mu}''(t)=:a_1>0 $$ and
$${\mathbb E}_{x_1,\ldots,x_k}\,(|Y_i|^3)= \abs{t_i}^3{\mathbb E}_{t_i}\bigl(\abs{X-\Lambda_{\mu}'(t_i)}^3\bigr)\ls
\max_{t\in J_{\mu}^{\ast}}\abs{t}^3{\mathbb E}_t\bigl(\abs{X-\Lambda_{\mu}'(t)}^3\bigr)=:b_1 <+\infty $$ for
all $1\ls i\ls k$. Applying Lemma~\ref{lem:product-9} we find $\theta>0$ and $k_0\in {\mathbb N}$ such that
if $k\gr k_0$ then
\begin{equation}\label{eq:zeta}\mathbb{P}_{x_1,\ldots ,x_k}\left (0\ls\sum_{i=1}^k Y_i\ls s_k\right )\gr\theta .\end{equation}
Now, we distinguish two cases:

\smallskip

\noindent \textit{Case 1}: Assume that $k<k_0$. Then, working as for $A_3$, we see that
$$P\!\left(\sum\limits_{i\in A_2} t_i(X_i-x_i)\gr  0\right)\gr
\prod\limits_{i\in A_2} P(X_i\gr x_i) \gr \prod\limits_{i\in A_2} P(X_i\gr M)= e^{-m(M)k}\gr e^{-m(M)k_0}.$$

\noindent \textit{Case 2}: Assume that $k\gr k_0$. From Lemma~\ref{lem:product-8} we have
\begin{align}\label{eq:4.30}P\!\left(\sum\limits_{i\in A_2} t_i(X_i-x_i)\gr 0\right)
&=\left(\int_{[0,\infty)} e^{-s_ku}\, d\nu_k(u)\right)\, \exp\!\left(-
\sum_{i=1}^k \Lambda_{\mu}^{\ast }(x_i)\right)\\
\nonumber &\gr e^{-s_k}\nu_k([0,1]) \exp\!\left(-\sum_{i\in A_2}\Lambda_{\mu}^{\ast }(x_i)\right).\end{align}
From \eqref{eq:zeta} we see that
$$\nu_k([0,1])=P_{x_1,\ldots ,x_k}(0\ls S_k\ls s_k)=\mathbb{P}\left (0\ls\sum_{i=1}^k Y_i\ls s_k\right )\gr\theta .$$
Moreover, $s_k\ls c\sqrt{k}$.
Combining the two cases we get the estimate of Lemma~\ref{lem:product-10} for $P_2$.\end{proof}

We can now complete the proof of Theorem~\ref{th:product}. Collecting the estimates from
Lemma~\ref{lem:product-6}, Lemma~\ref{lem:product-7} and Lemma~\ref{lem:product-10}, we may write
\begin{align*}
P\!\left(\sum\limits_{i=1}^n t_i(X_i-x_i)\gr 0\right ) &\gr
\prod_{j=1}^3 P\! \left(\sum\limits_{i\in A_j} t_i(X_i-x_i)\gr
0\right) \\ &\gr \exp\!\left(-\sum_{i=1}^n \Lambda_{\mu}^{\ast }(x_i)\right)\\
&\quad\times \exp\!\left(-\zeta \abs{A_1} -c_1\ln\abs{A_1}-c_2
-\zeta \sum_{i\in A_3} \Lambda_{\mu}^{\ast }(x_i)-c_3\sqrt{\abs{A_2}} -
c_4\right)\\ &\gr \exp\! \left(-\sum_{i=1}^n \Lambda_{\mu}^{\ast }(x_i)-
\zeta \sum_{i=1}^n \Lambda_{\mu}^{\ast }(x_i) -2\zeta n
\right),\end{align*} provided $n\gr n(\mu,\zeta)$ for an
appropriate $n(\mu,\zeta)\in\mathbb{N}$ depending only on
$\zeta$ and $\mu$. This proves \eqref{eq:main}.
\end{proof}

We are now able to provide an upper bound for $\varrho_2(\mu_n ,\delta )$.

\begin{theorem}\label{th:lower-mun}Let $\mu $ be an admissible even probability measure on ${\mathbb R}$
that satisfies the $\Lambda^{\ast}$-condition, i.e. $m(x)\sim\Lambda_{\mu}^{\ast }(x)$ as $x\uparrow x^{\ast}$.
Then, for any $\delta\in \left(0,\tfrac{1}{2}\right)$ and $\epsilon\in (0,1)$ we can find $n_0(\mu,\delta,\epsilon )$ such that
$$\varrho_2(\mu_n ,\delta )\ls \left(1+\epsilon \right)\mathbb{E}_{\mu}(\Lambda_{\mu}^{*})$$
for all $n\gr n_0(\mu,\delta,\epsilon )$.
\end{theorem}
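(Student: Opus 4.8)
The plan is to combine the inclusion estimate in Lemma~\ref{lem:inclusion} with the half-space depth bound of Theorem~\ref{th:product}, choosing the target set to be $A=B_{(1+\epsilon')T_n}(\mu_n)$ for a suitable small auxiliary parameter $\epsilon'$, and then to push $N$ just above $\exp((1+\epsilon)T_1 n)$. First I would fix $\delta\in(0,\tfrac12)$ and $\epsilon\in(0,1)$, and pick $\epsilon'>0$ small (to be tuned at the end, roughly $\epsilon'\approx\epsilon/10$). By Chebyshev's inequality applied to $\Lambda_{\mu_n}^{\ast}$, exactly as in the proof of Theorem~\ref{th:upper-mun}, one gets $\mu_n(\{\Lambda_{\mu_n}^{\ast}\gr (1+\epsilon')T_n\})\ls \beta(\mu)/(\epsilon'^2 n)$, hence $\mu_n(A)=\mu_n(B_{(1+\epsilon')T_n}(\mu_n))\gr 1-\beta(\mu)/(\epsilon'^2 n)\gr 1-\delta/2$ once $n$ is large. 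Next, apply Theorem~\ref{th:product} with parameter $\epsilon'$ and radius $r=(1+\epsilon')T_n$: for $n\gr n_0(\mu,\epsilon')$,
$$\inf_{x\in A}\varphi_{\mu_n}(x)\gr \exp(-(1+\epsilon')(1+\epsilon')T_n-2\epsilon' n)=\exp\bigl(-((1+\epsilon')^2 T_1+2\epsilon')n\bigr)=:e^{-\alpha n},$$
where $\alpha=(1+\epsilon')^2 T_1+2\epsilon'$. The point is that $\alpha<(1+\epsilon)T_1$ provided $\epsilon'$ is chosen small enough in terms of $\epsilon$ and $T_1={\mathbb E}_\mu(\Lambda_\mu^{\ast})$ (if $T_1=0$ the measure is degenerate, excluded; so $T_1>0$ and this is a genuine gap).

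Now I would plug these into \eqref{eq:inclusion}. Write $N=\lceil\exp((1+\epsilon)T_1 n)\rceil$, so $\ln N\sim (1+\epsilon)T_1 n$. Using the crude bound $\binom{N}{n}\ls N^n=\exp(n\ln N)$, the two error terms are controlled as follows. For the first term, $\binom{N}{n}p^{N-n}\ls \exp(n\ln N-(N-n)\ln(1/p))$; since $N$ grows exponentially in $n$ while $n\ln N$ grows only like $n^2$, and $\ln(1/p)>0$ because $p<1$, this tends to $0$. For the second term,
$$2\binom{N}{n}(1-e^{-\alpha n})^{N-n}\ls 2\exp\bigl(n\ln N-(N-n)e^{-\alpha n}\bigr);$$
here $(N-n)e^{-\alpha n}\approx \exp((1+\epsilon)T_1 n-\alpha n)=\exp((\,(1+\epsilon)T_1-\alpha)\,n)$, which grows exponentially in $n$ because $(1+\epsilon)T_1-\alpha>0$, whereas $n\ln N$ is only polynomial (quadratic) in $n$; hence this term also tends to $0$. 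Combining, for $n$ large enough (depending on $\mu,\delta,\epsilon$) we get $\binom{N}{n}p^{N-n}+2\binom{N}{n}(1-\inf_{x\in A}\varphi_{\mu_n}(x))^{N-n}\ls \delta/2$, so \eqref{eq:inclusion} gives
$${\mathbb E}_{\mu_n^N}[\mu_n(K_N)]\gr \mu_n(A)(1-\delta/2)\gr (1-\delta/2)^2\gr 1-\delta.$$

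The monotonicity of $N\mapsto {\mathbb E}_{\mu_n^N}[\mu_n(K_N)]$ (adding points only enlarges $K_N$) then shows $\inf\{{\mathbb E}_{\mu_n^N}[\mu_n(K_N)]:N\gr \exp((1+\epsilon)T_1 n)\}\gr 1-\delta$, and by the definition \eqref{eq:kappa-2} of $\varrho_2$ this yields $\varrho_2(\mu_n,\delta)\ls (1+\epsilon)T_1=(1+\epsilon){\mathbb E}_\mu(\Lambda_\mu^{\ast})$, as claimed. I expect the only real subtlety to be bookkeeping the choice of $\epsilon'$ so that the gap $(1+\epsilon)T_1-\alpha$ is strictly positive and the two tail terms are genuinely negligible — i.e. making sure the exponential decay from $(N-n)e^{-\alpha n}$ beats the $\exp(n\ln N)$ factor; this works precisely because $N$ is exponential in $n$ so $\ln N$ is linear in $n$ and $n\ln N$ is quadratic, while the competing term is exponential in $n$. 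Everything else is a routine application of Chebyshev, Theorem~\ref{th:product}, and Lemma~\ref{lem:inclusion}.
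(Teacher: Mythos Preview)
Your proposal is correct and follows essentially the same approach as the paper: Chebyshev for $\mu_n(B_{(1+\epsilon')T_n}(\mu_n))$, Theorem~\ref{th:product} for the depth lower bound, and Lemma~\ref{lem:inclusion} together with the crude bound on $\binom{N}{n}$ to kill the two error terms. The only notable (and harmless) difference is that the paper verifies the error bounds for every $N\gr\exp((1+\epsilon)T_1 n)$ directly, whereas you check them at the threshold value of $N$ and invoke the monotonicity of $N\mapsto{\mathbb E}_{\mu_n^N}[\mu_n(K_N)]$ to cover larger $N$; this is a legitimate shortcut.
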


\begin{proof}Let $\epsilon\in (0,1)$ and define $\zeta=T_1\epsilon /(3T_1+4)$. Note that if $T_n:=\mathbb{E}_{\mu_n}(\Lambda_{\mu_n}^{*})$ then,
as in the proof of Theorem~\ref{th:upper-mun}, Chebyshev's inequality implies that
\begin{equation*}
\mu_n(\{\Lambda_{\mu_n}^{*}\gr T_n +\zeta T_n  \})\ls \mu_n(\{|\Lambda_{\mu_n}^{*}-T_n |\gr \zeta T_n \})\ls\frac{\beta(\mu)}{\zeta^2n}.
\end{equation*}
Since $\zeta $ depends only on $\epsilon$ and $\mu$  we can find $n_0(\mu,\delta,\epsilon)$ such that
$$\frac{\beta(\mu)}{\zeta^2n}\ls\frac{\delta}{2}$$
and hence
$$\mu_n (B_{(1+\zeta )T_n }(\mu_n))\gr 1-\frac{\delta}{2}$$
for all $n\gr n_0(\mu,\delta,\epsilon)$. Assume that $N\gr\exp ((1+\epsilon)T_n)=\exp((1+3\zeta)T_n+4\zeta n)$.
Applying \eqref{eq:inclusion} with $A=B_{(1+\zeta )T_n }(\mu_n)$
and using the estimate of Theorem~\ref{th:product} we get
\begin{equation}\label{eq:inclusion-2}{\mathbb E}_{\mu_n^N}\,[\mu_n (K_N)]\gr \mu_n (B_{(1+\zeta )T_n }(\mu_n))\left (1-\binom{N}{n}p_{\mu}^{N-n}-2\binom{N}{n}\left (1-\exp(-(1+\zeta)^2T_n-2\zeta n)\right)^{N-n}\right).\end{equation}
Therefore, taking into account the fact that $(1+\zeta)^2<1+3\zeta$ for $\zeta<1$, we will have that
$$\varrho_2(\mu_n,\delta )\ls (1+\epsilon)T_1$$
if we check that
$$\binom{N}{n}\, p_{\mu}^{N-n}+2\binom{N}{n}\left (1-\exp(-(1+3\zeta)T_1n-2\zeta n)\right)^{N-n}\ls\frac{\delta}{2}.$$
We first claim that there exists $n_1(\mu,\delta)$ such that
$$\binom{N}{n}p_{\mu}^{N-n}<\frac{\delta}{4}$$
for all $n\gr n_1(\mu,\delta)$.  Indeed, since $\binom{N}{n}\ls (eN/n)^n$, it suffices to check that
\begin{equation}\label{eq:p}1+\ln\! \left(\frac{N}{n}\right) + \frac{N-n}{n}\, \ln
p_{\mu} < \frac{1}{n}\ln (\delta /4) .\end{equation}  Set $x:=N/n$. Then, \eqref{eq:p} is
equivalent to $$(x-1)\ln (1/p_{\mu}) -\ln x
>1+\frac{1}{n}\ln (4/\delta).$$ The claim follows from the facts that the function on
the left-hand side increases to infinity as $x\to\infty$, and
$x=N/n\gr\exp((1+3\zeta)T_1n+4\zeta n)/n\gr e^{4\zeta n}/n \to\infty $ when
$n\to\infty $.

Next we check that there exists $n_2(\mu,\delta, \epsilon )$ such that
$$2\,\binom{N}{n}\bigl[1-\exp(-(1+3\zeta)T_1n-2\zeta n)\bigr]^{N-n}<\frac{\delta}{4}$$
for all $n\gr n_2(\mu,\delta, \epsilon )$. Since $1-y\ls e^{-y}$, it suffices to check that
\begin{equation}\label{eq:41}\left(\frac{2eN}{n}\right)^n\exp\! \left(-(N-n)
\exp(-(1+3\zeta)T_1n-2\zeta n)\right)<\frac{\delta}{4}\end{equation} for all $n\gr n_2$.
Setting $x:=N/n$, we see that this inequality is equivalent to
$$\exp((1+3\zeta)T_1n+2\zeta n)<\frac{x-1}{\ln x+\ln(2e)+n^{-1}\ln (4/\delta)}.$$
Since $N\gr \exp((1+3\zeta)T_1n+4\zeta n)$, we easily check that
the right-hand side exceeds $\exp((1+3\zeta)T_1n+3\zeta n)$ when $n\gr n_2(\mu,\zeta,\delta )=n_2(\mu,\epsilon,\delta)$,
and hence we get \eqref{eq:41}. Combining the above we conclude that
$$\varrho_2(\mu_n,\delta)\ls \left(1+\epsilon \right)T_1$$
for all $n\gr n_0$, where $n_0= n_0(\mu,\delta,\epsilon )$ depends only on $\mu$, $\delta $ and $\epsilon $.
\end{proof}

\begin{proof}[Proof of Theorem~$\ref{th:final}$]Let $\delta\in\left(0,\tfrac{1}{2}\right)$ and $\varepsilon \in (0,1)$.
From the estimates of Theorem~\ref{th:upper-mun} and Theorem~\ref{th:lower-mun} we see that there exists
$n_0(\mu ,\delta ,\varepsilon)$ such that if $n\gr n_0$ then $\frac{c(\mu,\delta)}{\sqrt{n}}<\epsilon $ (where
$c(\mu ,\delta)$ is the constant in Theorem~\ref{th:upper-mun}) and
$$\varrho_1(\mu_n ,\delta )\gr \left(1-\frac{c(\mu,\delta)}{\sqrt{n}}\right)\mathbb{E}_{\mu}(\Lambda_{\mu}^{*})$$
as well as
$$\varrho_2(\mu_n ,\delta )\ls \left(1+\epsilon \right)\mathbb{E}_{\mu}(\Lambda_{\mu}^{*}).$$
Therefore,
$$\varrho (\mu_n,\delta)\ls 2\varepsilon \mathbb{E}_{\mu}(\Lambda_{\mu}^{*})$$
for all $n\gr n_0$. Since $\epsilon\in (0,1)$ was arbitrary, we see that $\lim\limits_{n\to\infty}\varrho(\mu_n,\delta)\to 0$,
as claimed in Theorem~\ref{th:final}.\end{proof}

\section{Threshold for the $p$-measures}\label{section:p}

We write $\nu$ for the symmetric exponential distribution on ${\mathbb R}$; thus, $\nu$ is the probability
measure with density $\frac{1}{2}\exp(-|x|)$. More generally, for any $p\gr 1$ we denote by $\nu_p$ the probability
distribution on ${\mathbb R}$ with density $(2\gamma_p)^{-1}\exp(-|x|^p)$, where $\gamma_p=\Gamma(1+1/p)$.
Note that $\nu_1=\nu$. The product measure $\nu_p^n=\nu_p^{\otimes n}$ has density $(2\gamma_p)^{-n}\exp(-\|x\|_p^p)$,
where $\|x\|_p=\left(\sum_{i=1}^n|x_i|^p\right)^{1/p}$ is the $\ell_p^n$-norm.

Our aim in this section is to show that $\nu_p$ satisfies the $\Lambda^{\ast}$-condition.

\begin{theorem}\label{th:p-condition}For any $p\gr 1$ we have that $-\ln (\nu_p[x,\infty))\sim \Lambda_{\nu_p}^{\ast}(x)$ as $x\to\infty$.
In other words,
\begin{equation}\label{eq:p-condition}\lim_{x\to +\infty}\frac{-\ln (\nu_p[x,\infty))}{\Lambda_{\nu_p}^{\ast}(x)}=1.\end{equation}
\end{theorem}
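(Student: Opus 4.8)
The plan is to compute both the numerator $m_p(x):=-\ln(\nu_p[x,\infty))$ and the denominator $\Lambda_{\nu_p}^{\ast}(x)$ asymptotically as $x\to\infty$, show each is $(1+o(1))x^p$, and conclude. Since $\nu_p$ has density $(2\gamma_p)^{-1}e^{-|t|^p}$, the upper tail is $\nu_p[x,\infty)=(2\gamma_p)^{-1}\int_x^\infty e^{-t^p}\,dt$. The standard Laplace/Watson estimate gives $\int_x^\infty e^{-t^p}\,dt = (1+o(1))\,\frac{1}{p}x^{1-p}e^{-x^p}$ as $x\to\infty$ (substitute $t=x(1+s/x^p)^{1/p}$ or simply integrate by parts once, writing $e^{-t^p}=\frac{1}{pt^{p-1}}\cdot pt^{p-1}e^{-t^p}$ and bounding the remainder). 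Taking logarithms, $m_p(x)=x^p - (p-1)\ln x + O(1) = (1+o(1))x^p$.

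Next I would estimate $\Lambda_{\nu_p}^{\ast}$. By Lemma~\ref{lem:2.5}(ii), $\Lambda_{\nu_p}^{\ast}(x)=xh(x)-\Lambda_{\nu_p}(h(x))$ where $h=(\Lambda_{\nu_p}')^{-1}$. Rather than solving for $h$ exactly, I would use the elementary two-sided bound that for any even probability measure, $\Lambda_{\mu}^{\ast}(x)\le m(x)$ (this is Lemma~\ref{lem:2.5}(v), from \eqref{eq:Lm}), which already gives $\limsup_{x\to\infty}\frac{m_p(x)}{\Lambda_{\nu_p}^{\ast}(x)}\ge 1$ trivially — wait, it gives the inequality in the wrong direction for the limit being $1$; it shows the ratio is $\ge 1$. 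So the real content is the matching upper bound $\Lambda_{\nu_p}^{\ast}(x)\ge (1-o(1))m_p(x) = (1-o(1))x^p$. For this I would use the variational lower bound $\Lambda_{\nu_p}^{\ast}(x)\ge tx-\Lambda_{\nu_p}(t)$ for a well-chosen $t=t(x)$. A direct computation (or the Latała--Wojtaszczyk bound quoted just before the statement, $g_p(|x|)\le \Lambda_{\nu_p}^{\ast}(cx)$, i.e. $\Lambda_{\nu_p}^{\ast}(x)\ge g_p(|x|/c)=(|x|/c)^p$ for large $x$) gives a lower bound of the correct order $x^p$ but with a constant $c^{-p}<1$, which is not sharp enough. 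To pin down the constant $1$, the clean route is: for $t\ge 1$, $\Lambda_{\nu_p}(t)=\ln\big((2\gamma_p)^{-1}\int_{\mathbb R}e^{ts-|s|^p}\,ds\big)$, and Laplace's method around the maximum $s_0$ of $ts-s^p$ (for $s>0$), namely $s_0=(t/p)^{1/(p-1)}$, yields $\Lambda_{\nu_p}(t)=(1+o(1))\,t\,s_0(1-1/p)=(1+o(1))\,c_p\,t^{p/(p-1)}$ with $c_p=(1-1/p)p^{-1/(p-1)}$. Optimizing $tx-c_pt^{p/(p-1)}$ over $t$ then gives $\Lambda_{\nu_p}^{\ast}(x)=(1+o(1))x^p$, matching $m_p(x)$.

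Combining the two asymptotics, $\frac{m_p(x)}{\Lambda_{\nu_p}^{\ast}(x)}=\frac{(1+o(1))x^p}{(1+o(1))x^p}\to 1$, which is \eqref{eq:p-condition}. I expect the main obstacle to be the careful bookkeeping in the Laplace-method estimate of $\Lambda_{\nu_p}(t)$ as $t\to\infty$: one must control the Gaussian-type correction near $s_0$ and the negligible contribution from $s<0$ and from $s$ far from $s_0$, uniformly enough to extract the leading constant (the subleading terms are only logarithmic in $x$ after taking the Legendre transform, so they are harmless for the ratio, but they do need to be shown to be $o(x^p)$). An alternative, possibly shorter, route that sidesteps Laplace's method entirely: use $\Lambda_{\nu_p}^{\ast}(x)\le m_p(x)$ for the easy direction, and for the hard direction note $\Lambda_{\nu_p}^{\ast}(x)= x h(x)-\Lambda_{\nu_p}(h(x))$ together with the identity $\Lambda_{\nu_p}(t)=\int_0^t\Lambda_{\nu_p}'(u)\,du$ and $h(x)=(\Lambda_{\nu_p}')^{-1}(x)$, so that $\Lambda_{\nu_p}^{\ast}(x)=\int_0^x h(u)\,du$; then it suffices to show $h(x)\sim p\,x^{p-1}$ as $x\to\infty$, equivalently $\Lambda_{\nu_p}'(t)\sim (t/p)^{1/(p-1)}$, which is again a one-line Laplace estimate of the ratio $\int s e^{ts-|s|^p}ds\big/\int e^{ts-|s|^p}ds$ but slightly cleaner since it is a ratio. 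Either way the proof is a pair of sharp tail/Laplace asymptotics glued by the Legendre duality already established in Section~\ref{section:background}.
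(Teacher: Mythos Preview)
Your approach is essentially the paper's: show $-\ln(\nu_p[x,\infty))\sim x^p$ by a tail estimate, then show $\Lambda_{\nu_p}^{\ast}(x)\sim x^p$ via Laplace's method applied to $\Lambda_{\nu_p}(t)$ as $t\to\infty$, obtaining $\Lambda_{\nu_p}(t)\sim \frac{p-1}{p^q}t^q$ (your $c_p$ equals $\frac{p-1}{p^q}$) and then passing to the Legendre transform. The paper carries out the last step by invoking a lemma of de~Bruijn type to deduce $\Lambda_{\nu_p}'(t)\sim p^{-(q-1)}t^{q-1}$ from the asymptotic of $\Lambda_{\nu_p}$, inverting to $h(x)\sim px^{p-1}$, and plugging into $\Lambda^{\ast}(x)=xh(x)-\Lambda(h(x))$; your route~(a), optimizing $tx-c_pt^{q}$ directly and combining with the easy bound $\Lambda^{\ast}\ls m$, is a legitimate shortcut that avoids differentiating an asymptotic.

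There is one genuine omission: your Laplace argument for $\Lambda_{\nu_p}(t)$ as $t\to\infty$ presupposes $p>1$, since for $p=1$ the domain $\{\Lambda_{\nu_1}<\infty\}=(-1,1)$ is bounded and there is no ``$t\to\infty$'' regime. The paper treats $p=1$ separately by the explicit formula $\Lambda_{\nu}^{\ast}(x)=\sqrt{1+x^2}-1-\ln\bigl((\sqrt{1+x^2}+1)/2\bigr)\sim x$ together with $-\ln(\nu[x,\infty))=x+\ln 2$; you should add this one-line case to make the proof cover the full range $p\gr 1$.
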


\begin{proof}[Proof of the case $p=1$]We start with the case $p=1$ which is simple because $\Lambda_{\nu}^{\ast}$ can be computed explicitly.
A direct calculation shows that $$\Lambda_{\nu}^{\ast}(x)=\sqrt{1+x^2}-1-\ln\left(\frac{\sqrt{1+x^2}+1}{2}\right),\qquad x\in {\mathbb R}.$$
It follows that $\Lambda_{\nu}^{\ast}(x)\sim x$ as $x\to \infty$. On the other hand,
$\nu([x,\infty))=\frac{1}{2}e^{-x}$ for all $x>0$, which shows that $-\ln (\nu([x,\infty))=x+\ln 2$, and hence
$-\ln (\nu [x,\infty))\sim x$ as $x\to\infty$. Combining the above we immediately see that \eqref{eq:p-condition} is satisfied for $p=1$.\end{proof}

For the rest of this section we fix $p>1$. Following \cite{BGT-book} we say that a non-negative function $f:{\mathbb R}\to {\mathbb R}$ is regularly varying
of index $s\in {\mathbb R}$, and write $f\in R_s$, if $\lim\limits_{x\to\infty}f(\lambda x)/f(x)=\lambda^s$ for every $\lambda >0$. It is proved
in \cite[Theorem~4.12.10]{BGT-book} that if $f\in R_s$ for some $s>0$ then
$$-\ln \left(\int_x^{\infty}e^{-f(t)}dt\right)\sim f(x)$$
as $x\to\infty$. Let $f_p(x)=|x|^p$, $x\gr 0$. It is clear that $f_p\in R_p$, and hence
$$-\ln (\nu_p[x,\infty))=-\ln \left((2\gamma_p)^{-1}\int_x^{\infty}e^{-f_p(t)}dt\right)
=\ln (2\gamma_p)-\ln \left(\int_x^{\infty}e^{-f_p(t)}dt\right)\sim f_p(x)$$
as $x\to\infty $. This proves the following.

\begin{lemma}\label{lem:p-condition-1}For every $p\gr 1$ we have that $-\ln (\nu_p[x,\infty))\sim x^p$ as $x\to\infty $.
\end{lemma}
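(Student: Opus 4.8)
The plan is to reduce the statement, for every $p\gr 1$, to a tail asymptotic for the one-dimensional density $(2\gamma_p)^{-1}e^{-|t|^p}$. Since $\nu_p([x,\infty))=(2\gamma_p)^{-1}\int_x^{\infty}e^{-t^p}\,dt$ and $\ln(2\gamma_p)$ is a fixed constant, it is enough to show that $-\ln\bigl(\int_x^{\infty}e^{-t^p}\,dt\bigr)\sim x^p$ as $x\to\infty$; the bounded additive term $\ln(2\gamma_p)$ cannot affect this asymptotic because $x^p\to\infty$. For the main line I would set $f_p(t)=|t|^p$ and note that $f_p(\lambda t)/f_p(t)=\lambda^p$ identically, so $f_p$ is regularly varying of index $p>0$, i.e. $f_p\in R_p$ in the notation of \cite{BGT-book}. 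Then the exponential Tauberian theorem \cite[Theorem~4.12.10]{BGT-book}, which asserts that $f\in R_s$ with $s>0$ implies $-\ln\int_x^{\infty}e^{-f(t)}\,dt\sim f(x)$, applied with $f=f_p$ gives exactly $-\ln\int_x^{\infty}e^{-t^p}\,dt\sim x^p$, and hence the lemma.

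If one prefers a self-contained argument avoiding \cite{BGT-book}, the same conclusion follows by a short integration by parts. Using $e^{-t^p}=-(pt^{p-1})^{-1}\frac{d}{dt}e^{-t^p}$ one gets, for $x>0$,
$$\int_x^{\infty}e^{-t^p}\,dt=\frac{e^{-x^p}}{p\,x^{p-1}}-\frac{p-1}{p}\int_x^{\infty}\frac{e^{-t^p}}{t^p}\,dt,$$
and bounding the last integral by $x^{-p}\int_x^{\infty}e^{-t^p}\,dt$ and solving yields $\int_x^{\infty}e^{-t^p}\,dt=\dfrac{e^{-x^p}}{p\,x^{p-1}}\,(1+o(1))$ as $x\to\infty$. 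Taking logarithms,
$$-\ln\Bigl(\int_x^{\infty}e^{-t^p}\,dt\Bigr)=x^p+(p-1)\ln x+\ln p+o(1),$$
so the right-hand side is $(1+o(1))x^p$ since $(p-1)\ln x=o(x^p)$. For $p=1$ the first term is exact and the correction vanishes, which is consistent with the explicit computation already made in the proof of the case $p=1$.

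I do not expect a genuine obstacle here: the only points to verify are the trivial membership $f_p\in R_p$ (or, on the second route, the elementary integration-by-parts estimate) and the observation that the additive constant $\ln(2\gamma_p)$ and the logarithmic term $(p-1)\ln x$ are both negligible against $x^p$. The mildly delicate point, if any, is merely to keep track that the subleading contributions are $o(x^p)$ rather than just $o(1)$, so that the ratio $-\ln(\nu_p[x,\infty))/x^p$ genuinely tends to $1$.
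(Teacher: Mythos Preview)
Your main argument is exactly the paper's: identify $f_p(t)=|t|^p\in R_p$, invoke \cite[Theorem~4.12.10]{BGT-book} to get $-\ln\int_x^\infty e^{-f_p(t)}\,dt\sim f_p(x)$, and absorb the constant $\ln(2\gamma_p)$. The integration-by-parts alternative you add is correct and self-contained, but it is not in the paper; it is a pleasant bonus rather than a different route to the same lemma.
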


Lemma~\ref{lem:p-condition-1} shows that in order to complete the proof of the theorem we have to show
that $\Lambda_{\nu_p}^{\ast}(x)\sim x^p$ as $x\to\infty $. Let $g_p(x)=x^2$ for $0\ls x< 1$ and $g_p(x)=x^p$ for $x\gr 1$. It is shown in \cite{Latala-Wojtaszczyk-2008} that for any $p\gr 1$ and $x\in {\mathbb R}$ one has
$$\Lambda_{\nu_p}^{\ast}(x/c)\ls g_p(|x|)\ls \Lambda_{\nu_p}^{\ast}(cx)$$
where $c>1$ is an absolute constant. 

For the proof of $\Lambda_{\nu_p}^{\ast}(x)\sim x^p$ as $x\to\infty $ we shall apply the Laplace method; more precisely, we
shall use the next version of Watson's lemma (see equation (2.34) in \cite[Section~2.2]{Murray-book}).

\begin{proposition}\label{prop:murray}Let $S<a<T\ls\infty $ and $g,h:[S,T]\to {\mathbb R}$, where
$g$ is continuous with a Taylor series in a neighborhood of $a$,
and $h$ is twice continuously differentiable and has its maximum at $a$ and satisfies $h^{\prime}(a)=0$
and $h^{\prime\prime}(a)<0$. Assume also that the integral
$$\int_S^Tg(x)e^{th(x)}\,dx$$
converges for large values of $t$. Then,
$$\int_S^Tg(x)e^{th(x)}\,dx\sim g(a)\left(-\frac{2\pi}{th^{\prime\prime}(a)}\right)^{1/2}e^{th(a)}+e^{th(a)}O(t^{-3/2})$$
as $t\to +\infty$.
\end{proposition}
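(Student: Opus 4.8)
This is the leading-order case of Laplace's method, and I would prove it in the two standard moves: (1) show that, up to a relative error of order $O(e^{-\eta t})$ for some $\eta>0$, the integral is concentrated in a small interval $[a-\delta,a+\delta]$ around the maximiser; (2) on that interval, Taylor-expand $h$ about $a$, factor out $e^{th(a)}$, and rescale by $\sqrt t$ to turn the local integral into a Gaussian integral in the limit.

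\emph{Step 1 (localisation).} Let $t_0$ be a value past which $\int_S^T|g(x)|e^{t_0h(x)}\,dx<\infty$. Fix $\delta>0$ small enough that $h\in C^2$ on $[a-\delta,a+\delta]$, that $h''<0$ there (possible since $h''$ is continuous and $h''(a)<0$), and that $g$ is bounded on $[a-\delta,a+\delta]$; write $G:=\max_{[a-\delta,a+\delta]}|g|$. Since $S$ is finite and $a$ is the (unique) maximiser of $h$, the quantity $\eta:=h(a)-\sup\{h(x):x\in[S,T],\ |x-a|\gr\delta\}$ is strictly positive — by compactness and continuity when $T<\infty$, and forced by the hypotheses when $T=\infty$ (e.g. it is automatic whenever $h(x)\to-\infty$, which is the situation in the applications). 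Then, for $t>t_0$,
$$\Bigl|\int_{\{|x-a|\gr\delta\}}g(x)e^{th(x)}\,dx\Bigr|\ls e^{(t-t_0)(h(a)-\eta)}\int_S^T|g(x)|e^{t_0h(x)}\,dx=e^{th(a)}\,O\!\bigl(e^{-\eta t}\bigr)=e^{th(a)}\,o(t^{-1/2}),$$
so it suffices to analyse $\int_{a-\delta}^{a+\delta}g(x)e^{th(x)}\,dx$.

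\emph{Step 2 (local analysis).} On $[a-\delta,a+\delta]$, Taylor's theorem with the Lagrange remainder and $h'(a)=0$ gives $h(x)-h(a)=\tfrac12 h''(\xi_x)(x-a)^2$ with $\xi_x$ between $x$ and $a$, where $h''(\xi_x)\to h''(a)$ as $x\to a$ by continuity of $h''$. Pulling out $e^{th(a)}$ and substituting $y=\sqrt t\,(x-a)$,
$$\int_{a-\delta}^{a+\delta}g(x)e^{th(x)}\,dx=\frac{e^{th(a)}}{\sqrt t}\int_{-\delta\sqrt t}^{\delta\sqrt t}g\!\Bigl(a+\tfrac{y}{\sqrt t}\Bigr)\exp\!\Bigl(\tfrac12 h''(\xi_{a+y/\sqrt t})\,y^2\Bigr)\,dy.$$
For each fixed $y$ the integrand tends to $g(a)\,e^{\frac12 h''(a)y^2}$ as $t\to\infty$, and, after shrinking $\delta$ so that $h''\ls\tfrac12 h''(a)$ on $[a-\delta,a+\delta]$, it is dominated uniformly in $t$ by the integrable function $G\,e^{\frac14 h''(a)y^2}$. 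Dominated convergence gives
$$\int_{-\delta\sqrt t}^{\delta\sqrt t}g\!\Bigl(a+\tfrac{y}{\sqrt t}\Bigr)\exp\!\Bigl(\tfrac12 h''(\xi_{a+y/\sqrt t})\,y^2\Bigr)\,dy\longrightarrow g(a)\int_{\mathbb R}e^{\frac12 h''(a)y^2}\,dy=g(a)\Bigl(\frac{-2\pi}{h''(a)}\Bigr)^{1/2},$$
so that $\int_{a-\delta}^{a+\delta}g(x)e^{th(x)}\,dx=e^{th(a)}t^{-1/2}\bigl(g(a)(-2\pi/h''(a))^{1/2}+o(1)\bigr)$; adding the tail estimate of Step 1,
$$\int_S^Tg(x)e^{th(x)}\,dx=g(a)\Bigl(\frac{-2\pi}{t\,h''(a)}\Bigr)^{1/2}e^{th(a)}+e^{th(a)}\,o(t^{-1/2}),$$
which is (slightly sharper than) the stated conclusion. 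Note that only the continuity of $g$ at $a$ and its local boundedness enter this leading-order statement; the hypothesis that $g$ has a Taylor series is needed only to carry the expansion to higher powers of $t^{-1/2}$.

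The step I expect to be the main obstacle is the localisation in Step 1 when $T=\infty$: there one cannot simply invoke compactness to get $\eta>0$, so the positivity of the gap must be extracted from ``$h$ has its maximum at $a$'' together with the convergence of $\int_S^Tg e^{th}$ for large $t$ (in the intended applications $h\to-\infty$, which makes $\eta>0$ trivial). The only other slightly delicate point is producing the $t$-uniform dominating function for the rescaled integrand; the Taylor expansion and the evaluation of the Gaussian integral are routine.
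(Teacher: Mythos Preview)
The paper does not prove this proposition; it simply quotes it from Murray's \emph{Asymptotic Analysis} (Section~2.2) and applies it in Lemma~\ref{lem:murray-1}. Your argument is the standard two-step proof of Laplace's method (localise, then Taylor-expand and rescale) and is correct for the leading-order asymptotic, in fact yielding the sharper error $e^{th(a)}o(t^{-1/2})$. The potential gap you flag in Step~1 when $T=\infty$ is genuine for the proposition as stated in full generality, but in the paper's only use of this result one has $h(s)=s-s^p$ on $[0,\infty)$, so $h\to-\infty$ and the positivity of $\eta$ is immediate; your proof therefore covers exactly what the paper needs. Your side remark that only continuity of $g$ at $a$ (not analyticity) is required for the leading term is also correct.
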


We apply Proposition~\ref{prop:murray} to get the next asymptotic estimate.

\begin{lemma}\label{lem:murray-1}Let $p>1$ and $q$ be the conjugate exponent of $p$. Then, setting $y=t^q$ we have that
$$I(t):=\int_0^{\infty}e^{tx-x^p}\,dx\sim y^{\frac{1}{p}}e^{yh(a)}\left[\left(-\frac{2\pi}{yh^{\prime\prime}(a)}\right)^{1/2}+O(y^{-3/2})\right]$$
as $t\to +\infty$, where $h(s)=s-s^p$ on $[0,\infty)$ and $a=p^{-q/p}$.
\end{lemma}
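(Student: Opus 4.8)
The plan is to rescale the variable so as to factor the parameter $t$ out of the exponent: after the substitution the integral becomes $y^{1/p}$ times a Laplace-type integral with a \emph{fixed} phase function, at which point Proposition~\ref{prop:murray} applies directly.

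First I substitute $x=t^{1/(p-1)}s$ in $I(t)$. Since $q=p/(p-1)$, we have $q/p=1/(p-1)$, hence $t^{1/(p-1)}=(t^q)^{1/p}=y^{1/p}$, and
$$tx-x^p=t\cdot t^{1/(p-1)}s-t^{p/(p-1)}s^p=t^q(s-s^p)=y\,h(s).$$
Thus $dx=y^{1/p}\,ds$ and $I(t)=y^{1/p}\int_0^{\infty}e^{y\,h(s)}\,ds$, where $h(s)=s-s^p$ on $[0,\infty)$.

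Next I record the relevant properties of the phase $h$. We have $h'(s)=1-ps^{p-1}$, which vanishes precisely at $a=p^{-1/(p-1)}=p^{-q/p}$, and $h'>0$ on $[0,a)$, $h'<0$ on $(a,\infty)$; so $h$ attains its maximum on $[0,\infty)$ at the point $a$, with $h'(a)=0$ and $h''(a)=-p(p-1)a^{p-2}<0$ (here $p>1$ is used). Moreover $h(s)=s-s^p\to-\infty$ as $s\to\infty$, so $\int_0^{\infty}e^{y\,h(s)}\,ds$ converges for every $y>0$, in particular for $y$ large. I then split the integral at $a/2$. On $[0,a/2]$, monotonicity of $h$ gives $h(s)\ls h(a/2)$, so $\int_0^{a/2}e^{y\,h(s)}\,ds\ls\tfrac{a}{2}e^{y\,h(a/2)}=e^{y\,h(a)}O(e^{-cy})$ with $c=h(a)-h(a/2)>0$; this is negligible compared with every term that follows. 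On $[a/2,\infty)$ the function $h$ is twice continuously differentiable (since $a/2>0$, so the possible non-smoothness of $s\mapsto s^p$ at the origin when $1<p<2$ is avoided) and attains its maximum at the interior point $a$; applying Proposition~\ref{prop:murray} with $S=a/2$, $T=\infty$, $g\equiv1$ and parameter $y$, we obtain
$$\int_{a/2}^{\infty}e^{y\,h(s)}\,ds\sim\left(-\frac{2\pi}{y\,h''(a)}\right)^{1/2}e^{y\,h(a)}+e^{y\,h(a)}O(y^{-1/2})$$
as $y\to+\infty$. Adding the exponentially small contribution from $[0,a/2]$, multiplying by $y^{1/p}$, and noting that $y=t^q\to\infty$ as $t\to\infty$, we get the asserted asymptotic for $I(t)$.

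The argument is essentially routine once the correct rescaling $x=t^{1/(p-1)}s$ is identified; the only point that needs a little care is the lack of $C^2$ regularity of $h$ at the origin in the range $1<p<2$, which is the reason for truncating the integral at $a/2>0$ and treating the piece near $0$ directly by monotonicity rather than by the Laplace method.
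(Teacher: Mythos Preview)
Your proof is correct and follows the same strategy as the paper: rescale via $x=t^{1/(p-1)}s$ to write $I(t)=y^{1/p}\int_0^{\infty}e^{y\,h(s)}\,ds$ with $y=t^q$, locate the maximum of $h(s)=s-s^p$ at $a=p^{-q/p}$, and then invoke Proposition~\ref{prop:murray} with $g\equiv 1$. The paper does exactly this, applying the proposition directly on $[0,\infty)$.

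Your version is in fact slightly more careful than the paper's. You correctly observe that for $1<p<2$ the second derivative $h''(s)=-p(p-1)s^{p-2}$ is unbounded near $s=0$, so $h$ is not $C^2$ on all of $[0,\infty)$ and the hypotheses of Proposition~\ref{prop:murray} as stated are not literally met with $S=0$. Your remedy---splitting off $[0,a/2]$ and bounding that piece by monotonicity as $e^{yh(a)}O(e^{-cy})$---is clean and costs nothing, while the paper simply applies the proposition on $[0,\infty)$ without comment. This is a cosmetic gap in the paper rather than a substantive one (the Laplace method only really needs local smoothness near the maximum and global integrability, both of which hold), but your treatment is the more rigorous of the two.
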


\begin{proof}We set $x=\lambda s$ and $t=\lambda^{p-1}$. Then,
$$I(t)=I(\lambda^{p-1})=\lambda \int_0^{\infty}e^{\lambda^p(s-s^p)}\,ds.$$
Now, set $y=\lambda^p=t^q$. Then,
$$I(t)=y^{1/p}\int_0^{\infty}e^{y(s-s^p)}ds.$$
We have $h^{\prime}(s)=1-ps^{p-1}$, therefore $h$ attains its maximum at $a=(1/p)^{\frac{1}{p-1}}=p^{-q/p}$.
Now, applying Proposition~\ref{prop:murray} with $g\equiv 1$ we see that
$$\int_0^{\infty }e^{yh(s)}ds\sim e^{yh(a)}\left[\left(-\frac{2\pi}{yh^{\prime\prime}(a)}\right)^{1/2}+O(y^{-3/2})\right],$$
and the lemma follows.\end{proof}

We proceed to study the asymptotic behavior of $\Lambda_{\nu_p}(t)$. Recall that
$$\Lambda_{\nu_p}(t)=\ln\left(c_p\int_{-\infty}^{\infty}e^{tx-|x|^p}\,dx\right),$$
where $c_p=(2\Gamma (1+1/p))^{-1}$. By the dominated convergence theorem,
$$\int_{-\infty}^0e^{tx-|x|^p}\,dx\longrightarrow 0$$
as $t\to +\infty $. Therefore, from Lemma~\ref{lem:murray-1},
$$c_p\int_{-\infty}^{\infty}e^{tx-|x|^p}\,dx\sim c_p\int_0^{\infty}e^{tx-x^p}\,dx
\sim c_py^{\frac{1}{p}}e^{yh(a)}\left[\left(-\frac{2\pi}{yh^{\prime\prime}(a)}\right)^{1/2}+O(y^{-3/2})\right],$$
where $h(s)=s-s^p$ on $[0,\infty)$, $a=p^{-q/p}$ and $y=t^q$. Now,
\begin{equation*}
\ln\left(c_py^{\frac{1}{p}}e^{yh(a)}\left[\left(-\frac{2\pi}{yh^{\prime\prime}(a)}\right)^{1/2}+O(y^{-3/2})\right]\right)
=\ln c_p+\frac{1}{p}\ln y+yh(a)+O(\ln y)\sim yh(a).
\end{equation*}
It follows that $\Lambda_{\nu_p}(t)\sim yh(a)= (p^{-q/p}-p^{-q})t^q$, where $q$ is the conjugate exponent of $p$.
We rewrite this as follows.

\begin{lemma}\label{lem:murray-2}Let $p>1$ and $q$ be the conjugate exponent of $p$. Then,
$$\Lambda_{\nu_p}(t)\sim \frac{p-1}{p^q}t^q\quad\hbox{as}\;\;t\to +\infty .$$
\end{lemma}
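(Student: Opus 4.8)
The plan is to obtain Lemma~\ref{lem:murray-2} essentially for free from the asymptotic analysis carried out immediately above its statement, reducing everything to one short computation of a constant. Concretely, I would recall that Lemma~\ref{lem:murray-1}, together with the dominated convergence estimate $\int_{-\infty}^{0}e^{tx-|x|^p}\,dx\to 0$ as $t\to+\infty$, gives
$$c_p\int_{-\infty}^{\infty}e^{tx-|x|^p}\,dx\;\sim\; c_p\,y^{1/p}e^{yh(a)}\Big[\big(-\tfrac{2\pi}{yh''(a)}\big)^{1/2}+O(y^{-1/2})\Big],$$
with $h(s)=s-s^p$ on $[0,\infty)$, $a=p^{-q/p}$ and $y=t^q$. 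Taking logarithms, the prefactors contribute only $\ln c_p+\tfrac1p\ln y$ together with an $O(\ln y)$ term from the bracket, all of which are $O(\ln t)=o(t^q)$; since $h(a)>0$ (indeed $a\in(0,1)$ and $h(s)=s(1-s^{p-1})>0$ on $(0,1)$), the term $yh(a)=h(a)t^q$ dominates. Hence $\Lambda_{\nu_p}(t)=\ln\bigl(c_p\int e^{tx-|x|^p}\,dx\bigr)\sim h(a)\,t^q$.

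The final step is to identify the constant $h(a)$. I would compute $h(a)=a-a^p=p^{-q/p}-p^{-q}$, and then use the conjugacy relation $\tfrac1p+\tfrac1q=1$, which gives $\tfrac{q}{p}=q-1$, so that $p^{-q/p}=p^{-(q-1)}=p\cdot p^{-q}$. Therefore
$$h(a)=p\cdot p^{-q}-p^{-q}=(p-1)\,p^{-q}=\frac{p-1}{p^q},$$
and combining with the previous paragraph yields $\Lambda_{\nu_p}(t)\sim\frac{p-1}{p^q}\,t^q$ as $t\to+\infty$, as claimed.

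I do not expect a serious obstacle here: the genuinely analytic content --- the Laplace-method (Watson's lemma) asymptotics of $I(t)=\int_0^\infty e^{tx-x^p}\,dx$ --- has already been established in Proposition~\ref{prop:murray} and Lemma~\ref{lem:murray-1}, and what remains is bookkeeping. The only mildly delicate point is the passage from the multiplicative asymptotic to an additive one under $\ln$: one must check that the subexponential factors $y^{1/p}$, the Gaussian-type factor, the $O(y^{-1/2})$ error and the constant $c_p$ all become $O(\ln t)$ after taking logarithms, which is $o(t^q)$ because $q>1$ and $h(a)>0$. Once that is noted, the identity $h(a)=(p-1)/p^q$ is immediate.
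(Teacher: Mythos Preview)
Your proposal is correct and follows essentially the same approach as the paper: the argument preceding Lemma~\ref{lem:murray-2} already derives $\Lambda_{\nu_p}(t)\sim yh(a)=(p^{-q/p}-p^{-q})t^q$ from Lemma~\ref{lem:murray-1} and dominated convergence, and the lemma merely records the rewriting $p^{-q/p}-p^{-q}=(p-1)/p^q$, which you carry out explicitly via $q/p=q-1$. Your added remark that $h(a)>0$ is a nice touch making the dominance of the $yh(a)$ term explicit.
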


\noindent Lemma~\ref{lem:murray-2} allows us to determine the asymptotic behavior of $\Lambda_{\nu_p}^{\ast}(x)$ as
$x\to\infty$. We need a lemma which appears in \cite{de-Bruijn-book} and \cite{Olver-book}.

\begin{lemma}\label{lem:de-bruijn}Let $q\gr 1$, $a>0$ and $f:[a,\infty )\to {\mathbb R}$ be a continuously differentiable
function such that $f^{\prime}$ is increasing on $[a,\infty)$ and $f(t)\sim t^q$ as $t\to +\infty$. Then,
$f^{\prime}(t)\sim qt^{q-1}$ as $t\to +\infty$.
\end{lemma}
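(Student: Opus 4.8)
The plan is to prove Lemma~\ref{lem:de-bruijn} by a convexity argument. Since $f'$ is increasing, $f$ is convex on $[a,\infty)$, and the key tool is the elementary two-sided inequality that for any $h>0$ and any $t$ with $t-h\gr a$,
$$\frac{f(t)-f(t-h)}{h}\;\ls\;f'(t)\;\ls\;\frac{f(t+h)-f(t)}{h}.$$
The idea is to choose $h$ proportional to $t$, say $h=\eta t$ for a small fixed $\eta>0$, plug in the asymptotic $f(t)\sim t^q$, and then let $t\to\infty$ followed by $\eta\to 0$ to pin down $f'(t)/(qt^{q-1})\to 1$.

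First I would record the convexity inequalities above (they follow from $f(t)-f(t-h)=\int_{t-h}^t f'(s)\,ds\ls h f'(t)$ and similarly on the other side, using monotonicity of $f'$). Next, for the upper bound on $\limsup$: fix $\eta\in(0,1)$ and apply the right inequality with $h=\eta t$ to get
$$f'(t)\;\ls\;\frac{f((1+\eta)t)-f(t)}{\eta t}.$$
Dividing by $qt^{q-1}$ and using $f((1+\eta)t)\sim ((1+\eta)t)^q$ and $f(t)\sim t^q$, the right-hand side tends (as $t\to\infty$) to $\dfrac{(1+\eta)^q-1}{q\eta}$. Hence $\limsup_{t\to\infty} f'(t)/(qt^{q-1})\ls \dfrac{(1+\eta)^q-1}{q\eta}$, and letting $\eta\downarrow 0$ this bound tends to $1$ (it is the derivative of $s\mapsto s^q$ at $s=1$, divided by $q$). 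Symmetrically, using the left inequality with $h=\eta t$ gives
$$f'(t)\;\gr\;\frac{f(t)-f((1-\eta)t)}{\eta t},$$
whose normalized right-hand side tends to $\dfrac{1-(1-\eta)^q}{q\eta}\to 1$ as $\eta\downarrow 0$, yielding $\liminf_{t\to\infty} f'(t)/(qt^{q-1})\gr 1$. Combining the two gives $f'(t)\sim qt^{q-1}$.

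The one technical point to be careful about — and the closest thing to an obstacle — is that the asymptotic relation $f(t)\sim t^q$ only controls $f$ at the argument $t$; when we evaluate $f$ at $(1\pm\eta)t$ we are using that $(1\pm\eta)t\to\infty$ as well, so $f((1\pm\eta)t)\sim((1\pm\eta)t)^q=(1\pm\eta)^q t^q$, which is fine. One should also note that $f'(t)\gr 0$ for large $t$ (since $f(t)\to\infty$ forces $f'$, being increasing, to be eventually positive), so all the divisions by $qt^{q-1}>0$ are legitimate and the quantities whose $\limsup$/$\liminf$ we take are eventually nonnegative. No monotone density or Tauberian machinery is needed; the whole argument is just convexity plus the definition of the derivative of $s^q$, applied in the form of a difference quotient of width $\eta$ around $s=1$.
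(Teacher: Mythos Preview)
Your proof is correct and follows essentially the same approach as the paper: both use the monotonicity of $f'$ to sandwich $f'(t)$ between difference quotients of $f$ over intervals of length proportional to $t$, then exploit $f(t)\sim t^q$ and finally let the proportionality parameter tend to $0$. The only cosmetic difference is that the paper writes $f(t)=t^q(1+\eta(t))$ with $|\eta(t)|\ls\varepsilon$ and chooses step $s=\sqrt{\varepsilon}\,t$, whereas you phrase the same idea via $\limsup/\liminf$ with a fixed $\eta$ followed by $\eta\downarrow 0$.
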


\begin{proof}[Sketch of the proof]Let $\varepsilon\in (0,1)$. There exists $b>a$ and $\eta :[b,\infty )\to {\mathbb R}$ such that
$|\eta(t)|\ls\varepsilon $ and $f(t)=t^q(1+\eta(t))$ for all $t>b$. Since $f^{\prime}$ is increasing, for any $s>0$ we have that
\begin{align*}
sf^{\prime}(t) &\ls \int_t^{t+s}f^{\prime}(u)\,du=f(t+s)-f(t)=\big((t+s)^q-t^q\big)+\big((t+s)^q\eta(t+s)-t^q\eta(t)\big)\\
&\ls sq(t+s)^{q-1}+2\varepsilon (t+s)^q.
\end{align*}
We set $s=\sqrt{\varepsilon}t$. Then,
$$f^{\prime}(t)\ls qt^{q-1}\big((1+\sqrt{\varepsilon})^{q-1}+2q^{-1}\sqrt{\varepsilon}(1+\sqrt{\varepsilon})^q\big)$$
for all $t>b$. In the same way we see that
$$f^{\prime}(t)\gr qt^{q-1}\big((1-\sqrt{\varepsilon})^{q-1}-2q^{-1}\sqrt{\varepsilon}\big)$$
for all $t>b/(1-\sqrt{\varepsilon})$, and the lemma follows.\end{proof}

We also need the next simple lemma.

\begin{lemma}\label{lem:minas}Let $a>0$ and $f:[a,+\infty)\to {\mathbb R}$ be a strictly increasing function.
Assume that for some $C>0$ and $p>1$ we have $f(x)\sim Cx^p$ as $x\to +\infty$, and that
$\lim\limits_{y\to +\infty}f^{-1}(y)=+\infty $. Then, $f^{-1}(y)\sim (y/C)^{1/p}$ as $y\to +\infty $.
\end{lemma}

\begin{proof}We may write $f(x)=Cx^pg(x)$ for some function $g:[a,+\infty)\to {\mathbb R}$ with
$\lim\limits_{x\to +\infty}g(x)=1$. Then, for sufficiently large $x$ we have that
$x=\left(\frac{f(x)}{C}\cdot\frac{1}{g(x)}\right)^{1/p}$. It follows that, for sufficiently large $y$,
$$f^{-1}(y)=\left(\frac{y}{C}\frac{1}{g(f^{-1}(y))}\right)^{1/p},$$
and the lemma follows because $\lim\limits_{y\to +\infty}f^{-1}(y)=+\infty$ and $\lim\limits_{x\to +\infty}g(x)=1$.
\end{proof}

\smallskip 

\begin{proof}[Proof of the case $p>1$ in Theorem~$\ref{th:p-condition}$]Now, we can show that
\begin{equation}\label{eq:asymptotic-final}\Lambda_{\nu_p}^{\ast}(x)\sim x^p\end{equation}
as $x\to\infty$. We know that $\Lambda_{\nu_p}^{\ast}(x)=xh(x)-\Lambda_{\nu_p} (h(x))$ where $h(x)=(\Lambda_{\nu_p}^{\prime})^{-1}(x)$.
From Lemma~\ref{lem:murray-2} and Lemma~\ref{lem:de-bruijn} we see that $\Lambda_{\nu_p}^{\prime}(t)\sim p^{-(q-1)}t^{q-1}$,
and Lemma~\ref{lem:minas} implies that
$$h(x)\sim px^{\frac{1}{q-1}}=px^{p-1},$$
using also the fact that $(p-1)(q-1)=1$. It follows that
$$\frac{\Lambda_{\nu_p}^{\ast}(x)}{x^p} = \frac{h(x)}{x^{p-1}}-\frac{\Lambda_{\nu_p} (h(x))}{x^p} =
\frac{h(x)}{x^{p-1}}-\frac{\Lambda_{\nu_p} (h(x))}{h(x)^{\frac{p}{p-1}}}\left(\frac{h(x)^{\frac{1}{p-1}}}{x}\right)^p
\longrightarrow p-\frac{p-1}{p^q}\cdot p^q=1$$
as $x\to\infty $. This proves \eqref{eq:asymptotic-final} and completes the proof of the theorem.\end{proof}

\bigskip

\noindent {\bf Acknowledgement.} The author is grateful to the referee for very useful comments and suggestions on the
presentation of the results of this article. He acknowledges support by the Hellenic Foundation for
Research and Innovation (H.F.R.I.) under the ``Third Call for H.F.R.I. PhD Fellowships" (Fellowship Number: 5779).

\bigskip


\footnotesize
\bibliographystyle{amsplain}

\begin{thebibliography}{100}



\bibitem{BGT-book} {\rm N.~H.\ Bingham, C.~M.\ Goldie and J.~L.\ Teugels},
\textit{Regular Variation\/},  Encyclopedia of Mathematics and its Applications, 27. Cambridge University Press, Cambridge, 1989. xx+494 pp.
\bibitem{Bonnet-Chasapis-Grote-Temesvari-Turchi-2019} {\rm G. Bonnet, G. Chasapis, J. Grote, D. Temesvari and N. Turchi},
\textit{Threshold phenomena for high-dimensional random polytopes}, Commun. Contemp. Math. 21 (2019), no. 5, 1850038, 30 pp.
\bibitem{Bonnet-Kabluchko-Turchi-2021} {\rm G. Bonnet, Z. Kabluchko and N. Turchi},
\textit{Phase transition for the volume of high-dimensional random polytopes}, Random Structures Algorithms 58 (2021), no. 4, 648--663.
\bibitem{BGP-depth} {\rm S.\ Brazitikos, A.\ Giannopoulos and M.\ Pafis}, \textit{Half-space depth of log-concave probability measures}, Preprint.
\bibitem{BGP-threshold} {\rm S.\ Brazitikos, A.\ Giannopoulos and M.\ Pafis}, \textit{Threshold for the expected measure of random polytopes}, Math. Annalen (to appear).

\bibitem{BGVV-book} {\rm S.\ Brazitikos, A.\ Giannopoulos, P.\ Valettas and B-H.\ Vritsiou}, \textit{Geometry of isotropic
convex bodies}, Mathematical Surveys and Monographs, 196. American Mathematical Society, Providence, RI, 2014. xx+594 pp.

\bibitem{Chakraborti-Tkocz-Vritsiou-2021} {\rm D. Chakraborti, T. Tkocz and B-H. Vritsiou},
\textit{A note on volume thresholds for random polytopes}, Geom. Dedicata 213 (2021), 423--431.
\bibitem{de-Bruijn-book} {\rm N.~G.~de Bruijn}, \textit{Asymptotic methods in analysis},
Corrected reprint of the third edition. Dover Publications, Inc., New York, 1981. xii+200 pp.
\bibitem{Deuschel-Stroock-book} {\rm J-D.\ Deuschel and D.\ Stroock}, \textit{Large deviations},
Pure and Applied Mathematics, 137. Academic Press, Inc., Boston, MA, 1989. xiv+307 pp.
\bibitem{DFM} {\rm M.\ E.\ Dyer, Z.\ F\"{u}redi and C.\ McDiarmid}, \textit{Volumes spanned by random points in the hypercube},
{Random Structures Algorithms} 3 (1992), 91--106.
\bibitem{Fe2} {\rm W.\ Feller}, \textit{An Introduction to Probability and its Applications} Vol.\
II, Second edition John Wiley \& Sons, Inc., New York-London-Sydney 1971 xxiv+669 pp.

\bibitem{Frieze-Pegden-Tkocz-2020} {\rm A.\ Frieze, W.\ Pegden and T.\ Tkocz}, \textit{Random volumes in $d$-dimensional polytopes},
Discrete Anal. 2020, Paper No. 15, 17 pp.
\bibitem{Gatzouras-Giannopoulos-2007} {\rm D.\ Gatzouras and A.\ Giannopoulos}, \textit{A large deviations approach to the geometry of random polytopes},
Mathematika 53 (2006), 173--210.
\bibitem{Gatzouras-Giannopoulos-2009} {\rm D.\ Gatzouras and A.\ Giannopoulos}, \textit{Threshold for the volume spanned by random points with independent
coordinates}, Israel J. Math. 169 (2009), 125--153.
\bibitem{Hewitt-Stromberg-book} {\rm E.~Hewitt and K.~Stromberg}, \textit{Real and abstract analysis. A modern treatment of the theory of
functions of a real variable}, Third printing. Graduate Texts in Mathematics, No. 25. Springer-Verlag, New York-Heidelberg, 1975. x+476 pp.
\bibitem{Latala-Wojtaszczyk-2008} {\rm R.\ Lata\l a and J. O.\ Wojtaszczyk}, \textit{On the infimum convolution inequality},
Studia\ Math.\ 189 (2008), 147-187.
\bibitem{Murray-book}  {\rm J.~D.~Murray}, \textit{Asymptotic analysis}, Second edition.
Applied Mathematical Sciences, 48. Springer-Verlag, New York, 1984. vii+164 pp.
\bibitem{Nagy-Schutt-Werner-2019} \textrm{S.\ Nagy, C.\ Sch\"{u}tt and E.\ M.\ Werner},
{\sl Halfspace depth and floating body}, Stat. Surv. {\bf 13} (2019), 52--118.
\bibitem{Olver-book} {\rm F.~W.~J.~Olver}, \textit{Asymptotics and special functions}, Computer Science and Applied Mathematics.
Academic Press [Harcourt Brace Jovanovich, Publishers], New York-London, 1974. xvi+572 pp.
\bibitem{Pivovarov-2007} {\rm P. Pivovarov}, \textit{Volume thresholds for Gaussian and spherical random polytopes and their duals},
Studia Math. 183 (2007), no. 1, 15--34.
\bibitem{Str} {D.\ Stroock}, \textit{Probability Theory.  An Analytic View}, Cambridge Univ.\ Press, Cambridge, 1993.


\end{thebibliography}

\small

\medskip

\thanks{\noindent {\bf Keywords:} threshold, random polytopes, convex bodies, half-space depth, Cramer transform.

\smallskip

\thanks{\noindent {\bf 2020 MSC:} Primary 60D05; Secondary 60E15, 62H05, 52A22, 52A23.}

\bigskip

\bigskip

\noindent \textsc{Minas~Pafis}: Department of
Mathematics, National and Kapodistrian University of Athens, Panepistimioupolis 157-84,
Athens, Greece.

\smallskip

\noindent \textit{E-mail:} mipafis@math.uoa.gr

\bigskip

\end{document}